\newtheorem{theorem}{Theorem}[section]
\newtheorem{lemma}[theorem]{Lemma}
\newtheorem{prop}[theorem]{Proposition}
\newtheorem{cor}[theorem]{Corollary}
\theoremstyle{definition}
\newtheorem{definition}[theorem]{Definition}
\newtheorem{example}[theorem]{Example}
\theoremstyle{remark}
\newtheorem{remark}[theorem]{\bf{Remark}}
\numberwithin{equation}{section}
\begin{document}
	\title[Numerical Radius inequalities via Orlicz function]  
 { Numerical Radius inequalities via Orlicz function}
	\author[P. Bhunia and R. K. Nayak]{ Pintu Bhunia and Raj Kumar Nayak}

	\address{(Bhunia) Department of Mathematics, Indian Institute of Science, Bengaluru 560012, Karnataka, India}
 \email {pintubhunia5206@gmail.com / pintubhunia@iisc.ac.in}
	\address{(Nayak) Department of Mathematics, GKCIET, Malda, India}
	\email{rajkumar@gkciet.ac.in / rajkumarju51@gmail.com}

 \thanks{ The first author, Dr. Pintu Bhunia would like to thank SERB, Govt. of India for the financial support in the form of National Post Doctoral Fellowship (File No. PDF/2022/000325) under the mentorship of Prof. Apoorva Khare}
 
	\subjclass[2010]{47A12, 47A30, 15A60}
	\keywords{ Numerical radius, Operator norm, Orlicz function, Inequality}
	\date{}
	\maketitle

 \begin{abstract}
 Employing the Orlicz functions we extend the Buzano's inequality which is a refinement of the Cauchy-Schwarz inequality. Also using the Orlicz functions
 we obtain several numerical radius inequalities for a bounded linear operator as well as the products of operators. We deduce different new upper bounds for the numerical radius. It is shown that
\begin{eqnarray*}
		    {w(T)} \leq  \sqrt[n]{ \log \left[ \frac{1}{2^{n-1}} e^{w(T^n)} + \left( 1-\frac{1}{2^{n-1}}\right) e^{\|T\|^n}\right]} &\leq& \|T\| \quad \forall n=2,3,4, \ldots
		\end{eqnarray*} 
  where $w(T)$ and $\|T\|$ denote the numerical radius and the operator norm of a bounded linear operator $T$, respectively.
 \end{abstract}

	\section{Introduction}
     
	\noindent	Let $\mathcal{B}({\mathcal{H}})$ represent the $C^*$-algebra comprising all bounded linear operators on a complex Hilbert space $\mathcal{H}.$  The absolute value of $T,$ is defined as $|T| = (T^*T)^{\frac{1}{2}},$ where $T^*$ denotes the Hilbert adjoint of the operator $T.$ The numerical range of $T \in \mathcal{B}(\mathcal{H})$ is denoted by $W(T)$, is the image of the unit sphere of $\mathcal{H}$ under the mapping $x \rightarrow \langle Tx, x \rangle.$ The numerical radius and the operator norm of an operator  $T \in \mathcal{B}(\mathcal{H})$, are denoted by $w(T)$ and $\|T\|$ respectively, and are defined as 
	$w(T) = \sup_{\|x\|=1} \left|\langle Tx, x \rangle \right|$
	and  $\|T\| = \sup_{\|x\|=1} \|Tx\|.$
	It is universally acknowledged that $w(\cdot)$ constitutes a norm on 
	$\mathcal{B}(\mathcal{H})$  which is comparable to the  operator norm through the following inequality:	
		$\frac{\|T\|}{2} \leq w(T) \leq \|T\|.$
Significant progress has been made in addressing this inequality in recent years. Here, we emphasize a few of these noteworthy advancements. 
	\noindent In \cite{kstudiamath1}, Kittaneh demonstrated that 
		$w(T) \leq \frac{1}{2} \left\||T| + |T^*|\right\|.$ 
	This inequality was further generalized by El-Haddad et al. \cite{kstudiamath3}, who asserted that
		$w^{2r}(T) \leq \frac{1}{2} \left\||T|^{2r} + |T^*|^{2r}\right\|, r\geq 1$.
	In 2015,  Abu-Omar et al. \cite{abu omar}  demonstrated that 
		$w^2(T) \leq \frac{1}{4} \left\||T|^2 + |T^*| \right\| + \frac{1}{2} w(T^2).$
	In 2021, Bhunia et al. \cite{bulletin sci} proved that
		$w^2(T) \leq \frac{1}{4} \left\||T|^2 + |T^*|^2 \right\| + \frac{1}{2} w\left(|T||T^*| \right).$
	Subsequently, Dragomir \cite{dragomir} derived the following inequality for the product of two operators, demonstrating that for $T, S \in \mathcal{B}(\mathcal{H})$ and $r \geq 1$,
	\begin{equation} \label{eq dragomir}
		w^r(S^*T) \leq \frac{1}{2} \left\||T|^{2r} + |S|^{2r} \right\|.
	\end{equation}
	For further details on recent work regarding the numerical radius inequalities, readers can see \cite{filomat, Pintu laa 2024, pintu asm,  book, Kittaneh MIA, Nayak IIST, Nayak Acta, Sababheh_Moradi_Sahoo_LAMA_2024}.
	
An Orlicz function is a mathematical function used in the context of Orlicz spaces, which generalize classical Lebesgue spaces.
\begin{definition}
	An Orlicz function $\phi: [0, \infty) \rightarrow [0, \infty)$ is a continuous, convex and increasing function that satisfies the following conditions:
	\begin{eqnarray*}
	~(i)~&&	\phi(0) = 0\\
	~~(ii)~&&	\phi(t) >0~~\forall t >0\\
	~~(iii)	&&\lim_{t \rightarrow \infty} \phi(t) = \infty.
	\end{eqnarray*}
\end{definition}

\begin{example}
	Most common examples of Orlicz function include\\
	
	(i) ~$\phi(t) = t^p$ with $p \geq 1,$ which corresponds to the standard ${\ell}_p$ spaces.\\
	
	(ii) ~~$\phi(t) = e^t -1,$ which defines the exponential Orlicz space.\\
	
	(iii)~~$\phi(t)= t^p \log(1+t),$  for $p>0$ mixed power exponential function.\\
	
	(iv)~~$\phi(t) = e^{t^2} -1,$ quadratic exponential function.
	\end{example}
\noindent An Orlicz function is said to be sub-multiplicative if $\phi(t_1 t_2) \leq \phi(t_1) \phi(t_2),$ for all $t_1, t_2 \geq 0.$
	The study of numerical radius inequalities through the lens of Orlicz functions offers a fascinating exploration of operator theory within complex Hilbert spaces. By employing Orlicz functions, one can achieve sharper bounds on the numerical radius of operators, which has significant implications for both theoretical and applied mathematics.\\
	\noindent 	The renowned Young inequality asserts that for any two positive real numbers $x$ and $y$, along with $t$ lying in the interval $[0,1]$, 
	$	x^ty^{1-t} \leq tx + (1-t) y$ holds.
	For $t = {1}/{2}$, we encounter the famous arithmetic-geometric mean inequality, which stipulates that for any two positive real numbers $x$ and $y$, the inequality 
		$\sqrt{xy} \leq \frac{x+y}{2}$ holds.
	
	In this paper, we obtain the numerical radius inequalities of bounded linear operators involving the Orlicz functions. Choosing different Orlicz functions we deduce some new bounds for the numerical radius which refines the existing classical ones. We also deduce the existing numerical radius inequalities mentioned here.

	\section{Main Results}
	
	In this section, we will establish various generalizations and enhancements of the upper bounds for numerical radii via Orlicz functions. To achieve this, we will employ the following well-known lemmas. The first lemma arises as a consequence of the spectral theorem, combined with Jensen's inequality.
	\begin{lemma}\label{positive op}\cite{mccarthy}
		Let $T \in \mathcal{B}(\mathcal{H})$ be a positive operator and $x$ be an unit vector in $\mathcal{H}.$ Then, for $r \geq 1,$ the inequality $\langle Tx, x \rangle^r \leq \langle T^rx, x \rangle $ holds.
	\end{lemma}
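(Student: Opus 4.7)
The plan is to exploit exactly the hint embedded in the text: the spectral theorem will turn the operator-theoretic inequality into a scalar inequality under a probability measure, at which point Jensen does all the work.

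First I would invoke the spectral theorem for the bounded positive self-adjoint operator $T$: there is a projection-valued spectral measure $E$ supported on $\sigma(T)\subseteq[0,\|T\|]$ such that $T=\int_{\sigma(T)}\lambda\,dE(\lambda)$, and consequently, by the functional calculus, $T^{r}=\int_{\sigma(T)}\lambda^{r}\,dE(\lambda)$ for every $r\geq 1$. Next, for the fixed unit vector $x\in\mathcal{H}$, I would define the scalar-valued measure $\mu_{x}(\Delta):=\langle E(\Delta)x,x\rangle$ on the Borel subsets of $\sigma(T)$. Because $E$ is a projection-valued measure and $\|x\|=1$, the measure $\mu_{x}$ is a Borel probability measure on $\sigma(T)$; this verification is the only bookkeeping step, and is immediate from $\mu_{x}(\sigma(T))=\langle Ix,x\rangle=1$ together with positivity of each $\langle E(\Delta)x,x\rangle$.

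Having set this up, I would translate the two inner products into scalar integrals against $\mu_{x}$:
\[
\langle Tx,x\rangle=\int_{\sigma(T)}\lambda\,d\mu_{x}(\lambda),\qquad \langle T^{r}x,x\rangle=\int_{\sigma(T)}\lambda^{r}\,d\mu_{x}(\lambda).
\]
Since $T\geq 0$ gives $\sigma(T)\subseteq[0,\infty)$, the function $t\mapsto t^{r}$ is continuous, convex and nonnegative on the support of $\mu_{x}$ for every $r\geq 1$. Jensen's inequality for the probability measure $\mu_{x}$ then yields
\[
\left(\int_{\sigma(T)}\lambda\,d\mu_{x}(\lambda)\right)^{r}\leq \int_{\sigma(T)}\lambda^{r}\,d\mu_{x}(\lambda),
\]
which is precisely $\langle Tx,x\rangle^{r}\leq\langle T^{r}x,x\rangle$.

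There is no real obstacle here; the proof is essentially a two-line translation once the spectral calculus is in place. If anything, the only subtle point is making sure one is entitled to use Jensen's inequality in this form, which requires $\mu_{x}$ to be a \emph{probability} measure (not just positive) and the integrand $\lambda^{r}$ to be convex on the actual support $\sigma(T)\subseteq[0,\infty)$; both of these are built into the hypotheses ($\|x\|=1$ and $T\geq 0$). A completely elementary alternative, if one wanted to avoid measure-theoretic machinery, would be to diagonalize via the continuous functional calculus and approximate $T$ by positive operators with finite spectrum, where the inequality reduces to the classical discrete Jensen inequality $\bigl(\sum c_{i}\lambda_{i}\bigr)^{r}\leq\sum c_{i}\lambda_{i}^{r}$ with weights $c_{i}=\|P_{i}x\|^{2}$ summing to $1$.
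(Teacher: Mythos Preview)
Your proof is correct and follows precisely the route the paper itself indicates: the lemma is quoted from \cite{mccarthy} without a written proof, but the surrounding text says it ``arises as a consequence of the spectral theorem, combined with Jensen's inequality,'' which is exactly the argument you carry out.
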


The second lemma presents a norm inequality for a non-negative convex function.

	\begin{lemma}\label{convex op} \cite{aujla}
		Let $\psi$ be a non-negative convex function on $[0, \infty)$ and $T, S \in \mathcal{B}(\mathcal{H})$ be positive operators. Then  $\left\|\psi\left(\frac{T+S}{2}\right)\right\| \leq \left\|\frac{\psi(T) + \psi(S)}{2}\right\| $
		holds.
		In particular, for $r \geq 1$,  \[\left\|\left(\frac{T+S}{2} \right)^r \right\| \leq \left\|\frac{T^r+S^r}{2}\right\|. \]
	\end{lemma}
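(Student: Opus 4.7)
The plan is to give a direct spectral/Jensen proof using two ingredients: the scalar convexity of $\psi$ and the operator form of Jensen's inequality (essentially Lemma \ref{positive op}, extended from $t\mapsto t^r$ to arbitrary convex $\psi$ via the spectral theorem). I would not go through the route of rearrangement/majorization that Aujla's original argument uses; the pointwise-on-unit-vectors approach is more elementary and suffices for the operator norm.

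First, fix a unit vector $x\in\mathcal{H}$. Writing $\langle\tfrac{T+S}{2}x,x\rangle=\tfrac12(\langle Tx,x\rangle+\langle Sx,x\rangle)$ and using the scalar convexity of $\psi$, I would obtain
\[
\psi\!\left(\Big\langle\tfrac{T+S}{2}x,x\Big\rangle\right)\le\tfrac12\bigl(\psi(\langle Tx,x\rangle)+\psi(\langle Sx,x\rangle)\bigr).
\]
Next, because $T$ and $S$ are positive, the spectral measure $d\langle E_T(\lambda)x,x\rangle$ is a probability measure on $\sigma(T)\subset[0,\infty)$, so Jensen's inequality (the general convex-function analogue of Lemma \ref{positive op}) yields $\psi(\langle Tx,x\rangle)\le\langle\psi(T)x,x\rangle$, and likewise for $S$. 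Combining these two steps gives
\[
\psi\!\left(\Big\langle\tfrac{T+S}{2}x,x\Big\rangle\right)\le\Big\langle\tfrac{\psi(T)+\psi(S)}{2}x,x\Big\rangle\le\Big\|\tfrac{\psi(T)+\psi(S)}{2}\Big\|,
\]
where the last inequality uses positivity of $\tfrac12(\psi(T)+\psi(S))$ (since $\psi\ge 0$).

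It then remains to take the supremum over unit $x$ on the left and identify it with $\|\psi(\tfrac{T+S}{2})\|$. This identification is the subtle point and would be the main technical obstacle. Since $A:=\tfrac{T+S}{2}$ is self-adjoint, the numerical range $W(A)$ coincides with the convex hull of $\sigma(A)$, an interval $[m,M]\subset[0,\infty)$; convexity of $\psi$ forces its maximum over $[m,M]$ to be attained at $m$ or $M$, both of which lie in $\sigma(A)$. Together with the spectral mapping identity $\|\psi(A)\|=\sup_{\lambda\in\sigma(A)}\psi(\lambda)$ (valid because $\psi\ge 0$ is continuous and $\psi(A)$ is self-adjoint), this gives $\sup_{\|x\|=1}\psi(\langle Ax,x\rangle)=\|\psi(A)\|$, completing the first inequality.

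Finally, the ``in particular'' clause follows by specializing $\psi(t)=t^r$ for $r\ge 1$, which is non-negative and convex on $[0,\infty)$. The only place where I must be slightly careful is the infinite-dimensional case in the spectral argument above, where eigenvectors may not exist; approximate eigenvectors together with continuity of $\psi$ handle this without difficulty.
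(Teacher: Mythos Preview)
Your argument is correct. The paper does not supply its own proof of this lemma---it is simply quoted from Aujla--Silva \cite{aujla}---so there is no in-paper argument to compare against directly.

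Your route is genuinely different from the cited source. Aujla--Silva work through weak majorization of eigenvalues: they show, for positive matrices, that the eigenvalue sequence of $\psi\bigl(\tfrac{T+S}{2}\bigr)$ is weakly majorized by that of $\tfrac{\psi(T)+\psi(S)}{2}$, which then yields the inequality for \emph{every} unitarily invariant norm, not just the operator norm. Your approach---scalar convexity of $\psi$, the operator Jensen inequality (this is exactly Lemma~\ref{jenson} in the present paper, so you may cite it rather than re-derive it), and the endpoint observation that for positive $A$ with spectral interval $[m,M]$ one has $\|\psi(A)\|=\sup_{\lambda\in\sigma(A)}\psi(\lambda)=\max\{\psi(m),\psi(M)\}=\sup_{\|x\|=1}\psi(\langle Ax,x\rangle)$---is more elementary and stays entirely within the toolkit the paper already records. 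It gives only the operator-norm case, but that is all this paper ever uses, so nothing is lost. The one step worth stating a bit more explicitly in a write-up is the endpoint identification: you should say that $m,M\in\sigma(A)$ (a standard fact for bounded self-adjoint operators) and that convexity forces the maximum of $\psi$ over $[m,M]$ to occur at an endpoint; continuity of $\psi$ then handles the passage from $W(A)$ to its closure $[m,M]$ in infinite dimensions.
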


The third lemma presents a generalized formulation of the mixed Schwarz inequality.

	\begin{lemma}\label{mixed schwarz} \cite{Res}
		Let $T \in \mathcal{B}(\mathcal{H})$ and $x, y \in \mathcal{H}$. Let $\psi, \eta$ are two non-negative continuous function on $[0, \infty) $ satisfying $\psi(t) \eta(t)=t.$ Then \[|\langle Tx, y \rangle| \leq \left\| \psi(|T|)x\right\| \left\|\eta(|T^*|)y \right\|.\]
	\end{lemma}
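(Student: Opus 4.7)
The plan is to prove the generalized mixed Schwarz inequality via the polar decomposition together with the standard Cauchy--Schwarz inequality and a contraction bound, following the strategy of Kittaneh.

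First, I would invoke the polar decomposition $T = U|T|$, where $U$ is a partial isometry with initial space $\overline{\mathrm{ran}(|T|)}$ and $U^*U$ is the orthogonal projection onto this subspace. Since $\psi$ and $\eta$ are non-negative continuous functions on $[0,\infty)$, the continuous functional calculus produces bounded positive operators $\psi(|T|)$ and $\eta(|T|)$, which commute because they are functions of the same positive operator $|T|$. The hypothesis $\psi(t)\eta(t) = t$ then yields the factorization
\begin{equation*}
|T| \;=\; \psi(|T|)\,\eta(|T|) \;=\; \eta(|T|)\,\psi(|T|).
\end{equation*}

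Next I would compute, using the self-adjointness of $\psi(|T|)$ and $\eta(|T|)$,
\begin{equation*}
\langle Tx, y\rangle \;=\; \langle U|T|x, y\rangle \;=\; \langle |T|x, U^*y\rangle \;=\; \langle \eta(|T|)\psi(|T|)x, U^*y\rangle \;=\; \langle \psi(|T|)x,\, \eta(|T|)U^*y\rangle.
\end{equation*}
Applying the ordinary Cauchy--Schwarz inequality gives
\begin{equation*}
|\langle Tx, y\rangle| \;\leq\; \|\psi(|T|)x\|\,\|\eta(|T|)U^*y\|.
\end{equation*}
It then remains to replace $\eta(|T|)U^*$ by $\eta(|T^*|)$ applied to $y$.

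The step I expect to be the main obstacle is precisely the last replacement: establishing $\|\eta(|T|)U^*y\| \leq \|\eta(|T^*|)y\|$. The key identity is the intertwining relation $U f(|T|) = f(|T^*|) U$ for continuous $f$ with $f(0)=0$. One first verifies this for monomials, using $|T^*|^2 = U|T|^2 U^*$ (which follows from $T^* = |T|U^*$ and the fact that $U^*U$ acts as the identity on $\overline{\mathrm{ran}(|T|)}$), then extends to polynomials without constant term, and finally to all continuous functions vanishing at $0$ by Stone--Weierstrass approximation. (When $\eta(0)\neq 0$, one writes $\eta(t) = (\eta(t)-\eta(0)) + \eta(0)$ and handles the constant summand separately, using $\|U^*y\|\leq\|y\|$.) Taking adjoints of $Uf(|T|) = f(|T^*|)U$ yields $f(|T|)U^* = U^* f(|T^*|)$, so
\begin{equation*}
\|\eta(|T|)U^*y\| \;=\; \|U^*\eta(|T^*|)y\| \;\leq\; \|\eta(|T^*|)y\|,
\end{equation*}
since $U^*$ is a contraction. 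Combining the two displayed bounds delivers the claimed inequality.
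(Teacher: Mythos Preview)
The paper does not supply a proof of this lemma; it is quoted as a known result with a citation to Kittaneh's 1988 paper \cite{Res}. Your argument is correct and in fact reproduces the standard proof from that reference: polar decomposition $T=U|T|$, the functional-calculus factorization $|T|=\psi(|T|)\eta(|T|)$, Cauchy--Schwarz, and the intertwining $f(|T|)U^{*}=U^{*}f(|T^{*}|)$ together with the contractivity of $U^{*}$. One minor remark: the intertwining $U f(|T|)=f(|T^{*}|)U$ actually holds for every continuous $f$ on $[0,\|T\|]$ (not only those with $f(0)=0$), since it is immediate for constants and your polynomial/approximation argument covers the rest; so the separate treatment of the case $\eta(0)\neq 0$ is unnecessary, though harmless.
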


The following outcome pertains to Buzano's renowned extension of the Cauchy-Schwarz inequality.

	\begin{lemma}\label{buzano}\cite{Buzano}
		Let $x, y, e \in \mathcal{H}$ with $\|e\|=1.$ Then  \[|\langle x, e\rangle \langle e , y\rangle| \leq \frac{1}{2}\left(\|x\|\|y\| + |\langle x, y \rangle| \right) .\]
	\end{lemma}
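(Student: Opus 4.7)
My approach will be operator-theoretic: I will reinterpret the left-hand side $|\langle x, e\rangle\langle e, y\rangle|$ as the absolute value of a matrix coefficient $\langle Px, y\rangle$ of the rank-one orthogonal projection $P$ onto the line $\mathrm{span}(e)$, namely $Pv := \langle v, e\rangle e$. The crucial observation that I will exploit is that $P$ admits the clean decomposition $P = \tfrac{1}{2}(I + U)$, where the \emph{reflection} $U := 2P - I$ is a self-adjoint unitary. This will force one half of the desired bound to be supplied by $\langle x, y\rangle$ itself and the other half by the unitary invariance of the norm.

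The steps will proceed in the following order. First, I will verify, using $\|e\| = 1$, that $P$ is self-adjoint and idempotent, which gives $U^* = U$ and $U^2 = 4P^2 - 4P + I = I$; in particular $\|Ux\| = \|x\|$ for every $x \in \mathcal{H}$. Second, I will substitute $P = \tfrac{1}{2}I + \tfrac{1}{2}U$ into $\langle Px, y\rangle$ to obtain the identity
\[\langle x, e\rangle\langle e, y\rangle \;=\; \tfrac{1}{2}\langle x, y\rangle + \tfrac{1}{2}\langle Ux, y\rangle.\]
Third, I will take absolute values, apply the triangle inequality, and bound the second term by the classical Cauchy--Schwarz inequality together with the unitarity $\|Ux\| = \|x\|$. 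This chain will give
\[|\langle x, e\rangle\langle e, y\rangle| \;\leq\; \tfrac{1}{2}|\langle x, y\rangle| + \tfrac{1}{2}\|x\|\|y\|,\]
which is exactly the claim.

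The main (and essentially only) obstacle is spotting the decomposition $P = \tfrac{1}{2}(I + U)$ with $U$ a reflection; once this is in hand, the remainder is a one-line consequence of Cauchy--Schwarz. No appeal to Lemma~\ref{positive op}, Lemma~\ref{convex op}, or Lemma~\ref{mixed schwarz} is needed, since the argument lives entirely inside the Hilbert space $\mathcal{H}$ and uses nothing beyond the elementary algebra of rank-one projections.
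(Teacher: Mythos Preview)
Your argument is correct. The rank-one projection $P$ onto $\operatorname{span}(e)$ indeed satisfies $P=P^*=P^2$, so $U:=2P-I$ is a self-adjoint unitary; the identity $\langle x,e\rangle\langle e,y\rangle=\langle Px,y\rangle=\tfrac12\langle x,y\rangle+\tfrac12\langle Ux,y\rangle$ then yields the claim after the triangle inequality and Cauchy--Schwarz together with $\|Ux\|=\|x\|$.

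As for comparison: the paper does \emph{not} supply a proof of this lemma at all; it is stated as a known result with a citation to Buzano \cite{Buzano}. So there is no ``paper's proof'' to measure yours against. Your reflection-decomposition approach is one of the standard clean proofs of Buzano's inequality; an equivalent vector-level version writes $\langle x,e\rangle e=\tfrac12 x+\tfrac12(2\langle x,e\rangle e-x)$ and notes that the second summand has norm $\|x\|$, which is exactly your $Ux$ unpacked. Either way, nothing further is needed.
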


The next lemma represents the operator form of the classical Jensen's inequality.
	\begin{lemma}\label{jenson} \cite{Res}
		Let $T \in \mathcal{B}(\mathcal{H})$ be a self-adjoint operator whose spectrum contained in the interval $J$, and let $x \in \mathcal{H}$ be a unit vector. If $h$ is a convex function on $J$, then \[h(\langle Tx, x \rangle) \leq \langle h(T)x, x \rangle. \]
	\end{lemma}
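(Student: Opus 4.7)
The plan is to reduce this operator statement to the classical Jensen inequality for probability measures via the spectral theorem. First I would invoke the spectral theorem for the self-adjoint operator $T$: since the spectrum $\sigma(T)$ is contained in $J$, there is a spectral measure $E$ on $J$ such that $T=\int_J \lambda\, dE(\lambda)$, and more generally the continuous functional calculus gives $h(T)=\int_J h(\lambda)\, dE(\lambda)$ whenever $h$ is continuous on $J$. (Convexity on an interval $J$ yields continuity on the interior, and only the restriction of $h$ to $\sigma(T)\subseteq J$ is relevant, so I would either assume continuity or dispatch the endpoints separately.)

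Next I would attach to the unit vector $x$ the scalar-valued measure $\mu_x$ defined by $\mu_x(A)=\langle E(A)x, x\rangle$ for Borel sets $A\subseteq J$. Since $E$ is a projection-valued measure with $E(J)=I$ and $\|x\|=1$, this is a Borel probability measure on $J$. From the functional calculus one obtains the two key identities
\begin{equation*}
\langle Tx, x\rangle=\int_J \lambda\, d\mu_x(\lambda)\quad\text{and}\quad\langle h(T)x, x\rangle=\int_J h(\lambda)\, d\mu_x(\lambda).
\end{equation*}

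Finally I would apply the classical Jensen inequality to the probability measure $\mu_x$ and the convex function $h$ on $J$, which gives $h\!\left(\int_J \lambda\, d\mu_x\right)\le \int_J h(\lambda)\, d\mu_x$. Substituting the two identities above yields $h(\langle Tx, x\rangle)\le \langle h(T)x, x\rangle$, as desired. A neat alternative route, avoiding an explicit appeal to scalar Jensen, is to use the supporting-line characterization of convexity: at the point $a=\langle Tx, x\rangle\in J$ there exist $c, d\in\mathbb{R}$ with $cs+d\le h(s)$ for all $s\in J$ and equality at $s=a$; then $\langle h(T)x, x\rangle\ge c\langle Tx, x\rangle+d=ca+d=h(a)$ by self-adjointness of $T$ and $\|x\|=1$.

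The main obstacle is conceptual rather than computational: one needs to justify that $\mu_x$ is genuinely a probability measure and that the functional-calculus identities hold for the (possibly only lower-semicontinuous) convex function $h$. Once those technicalities are in place, the inequality follows immediately from the scalar Jensen inequality, so I would spend most of the write-up on the spectral-measure setup and keep the final inequality as a one-line deduction.
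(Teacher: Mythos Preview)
Your argument is correct and is precisely the standard proof of this operator Jensen inequality: pass to the scalar spectral measure $\mu_x(\cdot)=\langle E(\cdot)x,x\rangle$, which is a probability measure because $\|x\|=1$, and then apply classical Jensen. The supporting-line alternative you sketch is equally valid and in some ways cleaner, since it sidesteps any continuity fuss about $h$ at endpoints.

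As for comparison with the paper: the paper does not actually prove this lemma. It is quoted from \cite{Res} as a known preliminary result and used later without proof, so there is no ``paper's own proof'' to compare against. Your write-up would simply be supplying the omitted justification, and the spectral-measure route you propose is exactly the argument one finds in the cited source and in standard references.
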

	
The following lemma is a refinement of the Cauchy-Schwarz inequality.

	\begin{lemma}\cite[Lemma 2.7]{Nayak C-S}\label{gen cauchy}
		Let $f: (0,1) \rightarrow [0,\infty)$ be a well-defined function. Then,  \[|\langle x, y \rangle|^2 \leq \frac{f(t)}{1+f(t)} \|x\|^2 \|y\|^2  + \frac{1}{1+ f(t)} |\langle x, y \rangle | \|x\|\|y\| \leq \|x\|^2 \|y\|^2 \quad \text{for any $x, y \in \mathcal{H}$}.\] 
	\end{lemma}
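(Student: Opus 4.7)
The plan is to reduce the two-sided estimate to the classical Cauchy--Schwarz inequality by straightforward algebra, after introducing the shorthand $a = |\langle x,y\rangle|$ and $b = \|x\|\|y\|$. Cauchy--Schwarz gives $0 \leq a \leq b$, and the lemma can be viewed as a weighted interpolation between the trivial identity $b^2 = b^2$ and Cauchy--Schwarz $a^2 \leq b^2$, with weights $\tfrac{1}{1+f(t)}$ and $\tfrac{f(t)}{1+f(t)}$; these weights are well-defined and sum to $1$ because $f(t) \in [0,\infty)$.

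For the right-hand inequality I would simply bound the factor $a$ in the middle term by $b$, using $a \leq b$ together with the positivity of $\tfrac{1}{1+f(t)}$, to obtain
\[
\frac{f(t)}{1+f(t)}\, b^2 \;+\; \frac{1}{1+f(t)}\, ab \;\leq\; \frac{f(t)}{1+f(t)}\, b^2 \;+\; \frac{1}{1+f(t)}\, b^2 \;=\; b^2,
\]
which is exactly $\|x\|^2\|y\|^2$. For the left-hand inequality, multiplying through by $1 + f(t) > 0$ converts it to the equivalent form
\[
(1+f(t))\, a^2 \;\leq\; f(t)\, b^2 + ab, \qquad \text{i.e.,} \qquad f(t)(b^2 - a^2) \;\geq\; a(a - b).
\]
Since $a \leq b$, the left side here is non-negative (as $f(t) \geq 0$ and $a^2 \leq b^2$) while the right side is non-positive, so the inequality holds for every admissible value of $f(t)$.

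I do not anticipate any real obstacle: both estimates reduce, via one line of algebra, to the single fact that $a \leq b$ supplied by the usual Cauchy--Schwarz inequality. The role of the parameter $t$ and the function $f$ is essentially cosmetic---they merely parameterize a non-negative real number $f(t) \geq 0$---and the inequality remains valid uniformly in $t \in (0,1)$. If a sharper form were desired, one could note that equality in the left inequality holds exactly when either $a = b$ (the Cauchy--Schwarz equality case, i.e.\ $x, y$ linearly dependent) or $f(t) = 0$.
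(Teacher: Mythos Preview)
Your argument is correct: both inequalities reduce, after the substitution $a=|\langle x,y\rangle|$ and $b=\|x\|\|y\|$, to the single fact $0\le a\le b$ furnished by Cauchy--Schwarz, and your algebra is clean. Note that the paper does not supply its own proof of this lemma---it is quoted verbatim from \cite[Lemma~2.7]{Nayak C-S}---so there is no in-paper argument to compare against; your elementary derivation is exactly the kind of proof one would expect for such a statement.

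One small imprecision in your closing parenthetical: equality in the left-hand estimate is equivalent to $(b-a)\bigl[f(t)(a+b)+a\bigr]=0$, which forces either $a=b$ or else $a=0$ together with $f(t)b=0$. Thus ``$f(t)=0$'' alone is not an equality case unless also $a=0$; for instance, with $f(t)=0$ and $0<a<b$ the inequality $a^2\le ab$ is strict. This does not affect the validity of the lemma itself.
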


First we present an enhancement of the Cauchy-Schwarz inequality through the Orlicz extension of the Buzano's inequality. The proof follows from Lemma \ref{buzano} and the convexity property of the Orlicz function.

\begin{prop} \label{Orlicz buzano}
Let $x, y,e \in \mathcal{H}$ with $\|e\|=1.$ Then for any Orlicz function $\phi,$
\[\phi\left(\left|\langle x, e \rangle \langle e, y \rangle \right|\right) \leq \frac{1}{2} \left(\phi\left(\|x\|\|y\|\right) + \phi \left(|\langle x, y \rangle |\right) \right). \]
In particular, for $\phi(t) = e^t-1$ and $e^{t^2} -1$,
\begin{eqnarray}
\left|\langle x, e \rangle \langle e, y\rangle \right| \leq \log  \left(\frac12{e^{\|x\|\|y\|} + \frac12 e^{|\langle x, y \rangle|}}\right) &\leq& \|x\|\|y\|
\end{eqnarray}
and
\begin{eqnarray}\label{0--1}
 \left|\langle x, e \rangle \langle e, y\rangle \right| \leq \sqrt{ \log  \left(\frac12{e^{\|x\|^2\|y\|^2} + \frac12 e^{|\langle x, y \rangle|^2}}\right)} &\leq& \|x\|\|y\|.
\end{eqnarray}
\end{prop}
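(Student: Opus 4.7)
The plan is to chain Buzano's inequality (Lemma~\ref{buzano}) with the two defining properties of an Orlicz function: monotonicity and convexity. Since $\phi$ is increasing, applying it to both sides of the estimate
\[
|\langle x,e\rangle\langle e,y\rangle|\leq \tfrac12\bigl(\|x\|\|y\|+|\langle x,y\rangle|\bigr)
\]
yields
\[
\phi\bigl(|\langle x,e\rangle\langle e,y\rangle|\bigr)\leq \phi\!\left(\frac{\|x\|\|y\|+|\langle x,y\rangle|}{2}\right).
\]
Then convexity of $\phi$ bounds the right-hand side by the arithmetic mean $\tfrac12\bigl(\phi(\|x\|\|y\|)+\phi(|\langle x,y\rangle|)\bigr)$, which is exactly the claimed inequality. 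This is essentially the only step, and there is no real obstacle — one just needs the two basic properties of $\phi$.

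For the two displayed specializations, I would substitute $\phi(t)=e^t-1$ and $\phi(t)=e^{t^2}-1$ respectively. Both are Orlicz functions (convex, increasing, vanishing at $0$). Plugging $\phi(t)=e^t-1$ into the main inequality and simplifying (the constant $-1$'s cancel) gives
\[
e^{|\langle x,e\rangle\langle e,y\rangle|}\leq \tfrac12 e^{\|x\|\|y\|}+\tfrac12 e^{|\langle x,y\rangle|},
\]
and taking logarithms produces the first displayed bound. The same procedure with $\phi(t)=e^{t^2}-1$ produces an inequality for $|\langle x,e\rangle\langle e,y\rangle|^2$; taking a square root after the logarithm gives \eqref{0--1}.

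Finally, for the right-most inequalities ($\leq \|x\|\|y\|$) in both displays, I would invoke the classical Cauchy--Schwarz inequality $|\langle x,y\rangle|\leq \|x\|\|y\|$. This makes $e^{|\langle x,y\rangle|}\leq e^{\|x\|\|y\|}$, so $\tfrac12 e^{\|x\|\|y\|}+\tfrac12 e^{|\langle x,y\rangle|}\leq e^{\|x\|\|y\|}$, and the logarithm returns $\|x\|\|y\|$; analogously in the quadratic case. So the full statement reduces to: (i) Buzano, (ii) monotonicity, (iii) convexity (Jensen at two points with equal weights), and (iv) Cauchy--Schwarz to read off the uniform upper bound. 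No step is delicate; the content is really the observation that Buzano's bound survives being pushed through an arbitrary Orlicz function.
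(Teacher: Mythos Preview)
Your proof is correct and matches the paper's own approach: the paper simply states that the proof follows from Lemma~\ref{buzano} together with the convexity of the Orlicz function, which is exactly the chain (Buzano $\to$ monotonicity $\to$ midpoint convexity) that you spell out, and your handling of the two special cases and the rightmost Cauchy--Schwarz bounds is the intended reading.
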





\noindent It is given in \cite{kstudiamath1} that for any $T \in \mathcal{B}(\mathcal{H}),$ 
$w(T ) \leq \frac{1}{2} \left(\|T\| + \|T^2\|^{\frac{1}{2}}\right).$
We ontain an extended version of the above bound via the Orlicz function, the proof follows by using the convexity property of the  Orlicz function.

\begin{prop}
    Let $T \in \mathcal{B}(\mathcal{H}).$ Then for any Orlicz function $\phi$, \[\phi\left(w(T)\right) \leq \frac{1}{2} \left( \phi(\|T\|) + \phi \left(\|T^2\|^{\frac{1}{2}} \right) \right).\]
    
    In particular, for $\phi(t) = e^t-1,$ 
 \begin{eqnarray}\label{p-9}
     w(T) \leq \log \left( \frac12 { e^{\|T\| } + \frac12 e^{\|T^2\|^{\frac{1}{2}}}} \right) &\leq & \|T\|.
 \end{eqnarray} 
\end{prop}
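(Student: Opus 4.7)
The plan is to derive the Orlicz-type bound directly from the classical Kittaneh inequality $w(T) \leq \frac{1}{2}(\|T\| + \|T^2\|^{1/2})$ cited immediately before the proposition, using only the monotonicity and convexity of an arbitrary Orlicz function $\phi$. Since $\phi$ is increasing, applying it to both sides of Kittaneh's bound yields
\[
\phi(w(T)) \leq \phi\!\left(\frac{\|T\| + \|T^2\|^{1/2}}{2}\right),
\]
and then the convexity of $\phi$ gives
\[
\phi\!\left(\frac{\|T\| + \|T^2\|^{1/2}}{2}\right) \leq \frac{1}{2}\phi(\|T\|) + \frac{1}{2}\phi(\|T^2\|^{1/2}),
\]
which is exactly the general inequality claimed. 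So the only structural input is Kittaneh's estimate combined with the two defining properties of an Orlicz function.

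For the particular case $\phi(t)=e^t-1$, I would substitute and simplify: the $-1$ terms cancel, leaving
\[
e^{w(T)} \leq \tfrac{1}{2}e^{\|T\|} + \tfrac{1}{2}e^{\|T^2\|^{1/2}},
\]
and taking the logarithm (which is monotone) produces the middle term in \eqref{p-9}. For the final inequality $\log\!\left(\tfrac{1}{2}e^{\|T\|} + \tfrac{1}{2}e^{\|T^2\|^{1/2}}\right) \leq \|T\|$, I would use the submultiplicativity of the operator norm, namely $\|T^2\|^{1/2} \leq \|T\|$, together with the monotonicity of the exponential to bound the right-hand side by $\tfrac{1}{2}e^{\|T\|} + \tfrac{1}{2}e^{\|T\|} = e^{\|T\|}$, and then take logarithms.

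There is essentially no hard step here; the proposition is a mechanical upgrade of Kittaneh's bound through a convex increasing function. The only point that requires a moment's care is verifying that the convexity step applies in the two-point Jensen form and that $\phi$ need not be sub-multiplicative or differentiable for any of this to go through, only continuous, convex and increasing, which is given in the definition. Hence the proof I would present is a short three-line chain for the general statement, followed by the two specializations for $\phi(t)=e^t-1$.
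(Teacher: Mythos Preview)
Your proposal is correct and matches the paper's approach: the paper does not write out a detailed proof but simply remarks that it follows from Kittaneh's bound $w(T)\le \tfrac12(\|T\|+\|T^2\|^{1/2})$ together with the convexity of the Orlicz function, which is precisely the monotone-plus-Jensen argument you outline. Your treatment of the special case $\phi(t)=e^t-1$ and the final inequality via $\|T^2\|^{1/2}\le \|T\|$ is also correct and in line with the paper's intent.
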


 The bound \eqref{p-9} refines the bound $w(T) \leq \|T\|.$
Next we prove the following theorem.

	\begin{theorem} \label{th 1}
		Let $T, S \in \mathcal{B}(\mathcal{H}).$ Let $f: (0,1)\rightarrow [0,\infty)$ be any well-defined function. Then for any sub-multiplicative Orlicz function $\phi$, 
		\begin{eqnarray*}
			\phi\left(w^2(T^*S)\right) &\leq& \frac{1}{2(1+ f(t))} \phi\left(w(T^*S)\right) \left\| \phi(|T|^2) + \phi(|S|^2) \right\| \\ && + \frac{f(t)}{2(1+f(t))} \phi\left(w(|S|^2 |T|^2)\right) + \frac{f(t)}{4(1+f(t))}  \left\| \phi(|T|^4) + \phi(|S|^4) \right\| .
		\end{eqnarray*}
		\end{theorem}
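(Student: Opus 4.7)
The plan is to begin with Lemma \ref{gen cauchy} applied to the pair $Sx, Tx$ for a unit vector $x$, and then process the two resulting summands separately through the convexity, monotonicity, and sub-multiplicativity of $\phi$. Since $\langle T^{*}Sx, x\rangle = \langle Sx, Tx\rangle$, Lemma \ref{gen cauchy} produces
\begin{equation*}
|\langle Sx, Tx\rangle|^{2} \leq \tfrac{f(t)}{1+f(t)}\,\|Sx\|^{2}\|Tx\|^{2} + \tfrac{1}{1+f(t)}\,|\langle Sx, Tx\rangle|\,\|Sx\|\|Tx\|,
\end{equation*}
and, since the two coefficients sum to $1$, applying the increasing convex function $\phi$ splits this into
\begin{equation*}
\phi(|\langle Sx, Tx\rangle|^{2}) \leq \tfrac{f(t)}{1+f(t)}\,\phi(\|Sx\|^{2}\|Tx\|^{2}) + \tfrac{1}{1+f(t)}\,\phi(|\langle Sx, Tx\rangle|\,\|Sx\|\|Tx\|).
\end{equation*}

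For the first summand I write $\|Sx\|^{2}\|Tx\|^{2} = \langle|S|^{2}x, x\rangle\langle x, |T|^{2}x\rangle$ and apply Buzano (Lemma \ref{buzano}) with $e = x$, followed by AM--GM on $\||S|^{2}x\|\,\||T|^{2}x\|$, to obtain
\begin{equation*}
\|Sx\|^{2}\|Tx\|^{2} \leq \tfrac{1}{2}\left\langle\tfrac{|S|^{4}+|T|^{4}}{2}x, x\right\rangle + \tfrac{1}{2}\,|\langle|T|^{2}|S|^{2}x, x\rangle|.
\end{equation*}
Since the weights on the right are $\tfrac12, \tfrac12$, I push $\phi$ through by convexity, apply Lemma \ref{jenson} to the positive operator $\tfrac{|S|^{4}+|T|^{4}}{2}$, take the supremum over unit $x$, and finally invoke Lemma \ref{convex op} to obtain
\begin{equation*}
\sup_{\|x\|=1}\phi(\|Sx\|^{2}\|Tx\|^{2}) \leq \tfrac{1}{4}\,\|\phi(|T|^{4})+\phi(|S|^{4})\| + \tfrac{1}{2}\,\phi(w(|S|^{2}|T|^{2})),
\end{equation*}
using that $|T|^{2}|S|^{2}$ and $|S|^{2}|T|^{2}$ are mutually adjoint, hence share the same numerical radius.

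For the second summand, sub-multiplicativity of $\phi$ separates it into $\phi(|\langle Sx, Tx\rangle|)\,\phi(\|Sx\|\|Tx\|)$; then AM--GM on $\|Sx\|\|Tx\|$, Jensen on $\tfrac{|S|^{2}+|T|^{2}}{2}$, and Lemma \ref{convex op} bound its supremum over unit $x$ by $\phi(w(T^{*}S))\cdot\tfrac{1}{2}\|\phi(|T|^{2})+\phi(|S|^{2})\|$, using $|\langle Sx, Tx\rangle|\leq w(T^{*}S)$ pointwise. Taking the supremum over unit vectors throughout, noting that $\phi(w^{2}(T^{*}S)) = \sup_{\|x\|=1}\phi(|\langle Sx, Tx\rangle|^{2})$ by continuity and monotonicity of $\phi$, and assembling the two bounds yields exactly the claimed inequality. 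The delicate step is the first summand: one has to recognize Buzano's inequality as the device that extracts the $\phi(w(|S|^{2}|T|^{2}))$ term (rather than only producing $\||S|^{2}x\|\,\||T|^{2}x\|$-type contributions), and to track the factors $\tfrac12$ produced by both Buzano and AM--GM so that the final coefficients match the $\tfrac{f(t)}{4(1+f(t))}$ and $\tfrac{f(t)}{2(1+f(t))}$ appearing in the theorem.
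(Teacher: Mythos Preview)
Your proof is correct and follows essentially the same route as the paper's: apply Lemma~\ref{gen cauchy}, split via convexity of $\phi$, handle the $\|Sx\|^{2}\|Tx\|^{2}$ term via Buzano plus AM--GM, and the cross term via sub-multiplicativity plus AM--GM, invoking Lemma~\ref{jenson} throughout. The only cosmetic difference is that where you use Lemma~\ref{jenson} on $\tfrac{|T|^{2}+|S|^{2}}{2}$ (and $\tfrac{|T|^{4}+|S|^{4}}{2}$) followed by Lemma~\ref{convex op} after taking the supremum, the paper instead applies scalar convexity of $\phi$ first and then Lemma~\ref{jenson} to each summand separately before taking the supremum; both orderings yield the same bound.
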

	\begin{proof}
		Let $x \in \mathcal{H}$ be an unit vector. By using the convex nature of $\phi$, we get that
		\begin{eqnarray*}
		\phi\left(	\left|\langle T^*Sx, x \rangle \right|^2 \right) &=& \phi\left(\left|\langle Tx, Sx \rangle \right|^{2} \right)\\
			&\leq& \phi \left( \frac{1}{(1+ f(t))}  \|Tx\|\|Sx\| |\langle Tx, Sx \rangle| +  \frac{f(t)}{(1+ f(t))} \|Tx\|^2\|Sx\|^2 \right)\\&&\,\,\,\,\,\,\,\,\,\,\,\,\,\,\,\,\,\,\,\,\,\,\,\,\,\,\,\,\,\,\,\,\,\,\,\,\,\,\,\,\,\,\,\,\,\,\,\,\,\,\,\,\,\,\,\,\,\,\,\,\,\,\,\,\,\,\,\,\,\,\,\,\,\,\,\,\,\,\,\,\,\,\,\,\,\,\,\,\,\,\,\,\,\,\,\,\,\,\,\,\,\,\,\,\,\, (\mbox{using Lemma \ref{gen cauchy}})\\
			&\leq& \frac{1}{(1+ f(t))} \phi\left( \|Tx\| \|Sx\||\langle T^*Sx, x \rangle|\right) + \frac{f(t)}{(1+ f(t))} \phi\left(\|Tx\|^{2} \|Sx\|^{2} \right)\\
			&\leq& \frac{1}{(1+ f(t))} \phi \left(\left\langle \left(\frac{|T|^{2}+ |S|^{2}}{2} \right)x, x \right\rangle \right) \phi \left(\left|\langle T^*Sx, x \rangle \right| \right)\\&& + \frac{f(t)}{(1+ f(t))} \phi\left(\langle |T|^{2}x, x \rangle \langle x, |S|^{2}x \rangle \right)\\ 	&&\,\,\,\,\,\,\,\,\,\,\,\,\,\,\,\,\,\,\,\,\,\,\, ~(\mbox{using A.M-G.M inequality and sub-multiplicity of $\phi$})\\ 
			&\leq& \frac{1}{2(1+ f(t))}  \left\langle \left(\phi\left(|T|^{2}\right)+ \phi\left(|S|^{2}\right) \right)x, x \right\rangle \phi\left( \left|\langle T^*Sx, x \rangle \right|\right) \\&&+ \frac{f(t)}{(1+ f(t))} \phi \left( \frac{\left\||T|^{2}x\right\|\left\||S|^{2}x \right\| + |\left\langle |T|^{2}x, |S|^{2}x \right\rangle| }{2} \right)\\ &&~\,\,\,\,\,\,\,\,\,\,\,\,\,\,\,\,\,\,\,\,\,\,\,\,\,\,\,\,\,\,\,\,\,\,\,\,\,\,\,\,\,\,\,\,\,\,\,\,\,\,\,\,\,\,\,\,\,\,\,\,\,\,\,\,\,\,\,\,\,\,\,\,\,\,\,\,\,\,\,\,\,\,\,\,\,\,\,\,\,\,\,(\mbox{using Lemma \ref{buzano}})\\
			&\leq&\frac{1}{2(1+ f(t))}  \left\langle \left(\phi\left(|T|^{2}\right)+ \phi\left(|S|^{2}\right) \right)x, x \right\rangle \phi\left( \left|\langle T^*Sx, x \rangle \right|\right)  \\&&+ \frac{f(t)}{2(1+ f(t))} \phi \left( \left\langle \left(\frac{|T|^{4} + |S|^{4} }{2} \right)x, x \right\rangle \right)\\&&+ \frac{f(t)}{2(1+f(t))} \phi \left(|\left\langle \left(|S|^{2} |T|^{2}\right)x, x \right\rangle|\right)\\
		&\leq&	\frac{1}{2(1+ f(t))}  \left\langle \left(\phi\left(|T|^{2}\right)+ \phi\left(|S|^{2}\right) \right)x, x \right\rangle \phi\left( \left|\langle T^*Sx, x \rangle \right|\right)  \\&&+ \frac{f(t)}{4(1+ f(t))}  \left\langle \left(\phi\left(|T|^{4}\right) + \phi \left(|S|^{4}\right)  \right)x, x \right\rangle \\&&+ \frac{f(t)}{2(1+f(t))} \phi \left(|\left\langle \left(|S|^{2} |T|^{2}\right)x, x \right\rangle|\right)\\
		&\leq& \frac{1}{2(1+ f(t))} \phi\left(w(T^*S)\right) \left\| \phi(|T|^2) + \phi(|S|^2) \right\| \\ && + \frac{f(t)}{2(1+f(t))} \phi\left(w(|S|^2 |T|^2)\right) + \frac{f(t)}{4(1+f(t))}  \left\| \phi(|T|^4) + \phi(|S|^4) \right\|.
		\end{eqnarray*}
		By taking the supremum over all $x$ with $\|x\| =1,$ we derive the desired inequality.
	\end{proof}

	The following corollaries can be derived from Theorem \ref{th 1} by choosing different Orlicz functions.
	
	\begin{cor} 
		Let $T, S \in \mathcal{B}(\mathcal{H})$ and $f: (0,1) \rightarrow [0,\infty)$ be a well-defined function. Then for $r \geq 1,$  
	\begin{eqnarray*}
		w^{2r}(T^*S) &\leq& \frac{1}{2(1+ f(t))} w^r (T^*S)  \left\||T|^{2r} + |S|^{2r}\right\| +  \frac{f(t)}{4(1+ f(t))} \left\||T|^{4r} + |S|^{4r}\right\| \\ && + \frac{f(t)}{2(1+ f(t))}  w^r\left( |S|^{2} |T|^{2}\right)\\ &\leq& \frac{1}{2} \left\| |T|^{4r} + |S|^{4r} \right\|.
	\end{eqnarray*}	
	\end{cor}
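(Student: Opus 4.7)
The plan is to apply Theorem \ref{th 1} with the Orlicz function $\phi(t)=t^r$ for $r\geq 1$. This function is continuous, convex, strictly positive on $(0,\infty)$, vanishes at $0$, tends to infinity, and is (even) multiplicative, since $(t_1t_2)^r = t_1^r t_2^r$. Substituting and noting that $\phi(w^2(T^*S))=w^{2r}(T^*S)$, $\phi(|T|^{2k})=|T|^{2kr}$ for $k=1,2$, and $\phi(w(|S|^2|T|^2))=w^r(|S|^2|T|^2)$, one obtains the first inequality of the corollary directly.

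For the second inequality, set $A:=\||T|^{4r}+|S|^{4r}\|$ and $B:=\||T|^{2r}+|S|^{2r}\|$. I would bound each of the three summands by combining three ingredients: (i) Dragomir's inequality \eqref{eq dragomir}, which gives $w^r(T^*S)\leq B/2$ and hence $w^r(T^*S)\cdot B \leq B^2/2$; (ii) Lemma \ref{convex op} applied to the convex function $t\mapsto t^2$ on the positive operators $|T|^{2r}, |S|^{2r}$, which yields $B^2 \leq 2A$; and (iii) the bound $w^r(|S|^2|T|^2)\leq A/2$. Once all three are in hand, the middle expression is controlled by
\[
\frac{1}{2(1+f(t))}\cdot\frac{B^2}{2} + \frac{f(t)}{4(1+f(t))}\,A + \frac{f(t)}{2(1+f(t))}\cdot\frac{A}{2} = \frac{B^2+2f(t)A}{4(1+f(t))} \leq \frac{2A(1+f(t))}{4(1+f(t))} = \frac{A}{2},
\]
where the inequality step uses $B^2 \leq 2A$ from (ii).

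The main obstacle is ingredient (iii), which I plan to prove pointwise on a unit vector $x\in\mathcal{H}$. The Cauchy--Schwarz inequality gives $|\langle|S|^2|T|^2 x,x\rangle| = |\langle|T|^2 x,|S|^2 x\rangle| \leq \||T|^2 x\|\,\||S|^2 x\|$, and then AM--GM raised to the $r$-th power yields
\[
\bigl(\||T|^2 x\|\,\||S|^2 x\|\bigr)^r \leq \bigl(\tfrac12\langle(|T|^4+|S|^4)x,x\rangle\bigr)^r.
\]
Applying the scalar convexity inequality $\bigl(\tfrac{a+b}{2}\bigr)^r\leq\tfrac{a^r+b^r}{2}$ followed by Lemma \ref{positive op} for the positive operators $|T|^4$ and $|S|^4$ pushes the $r$-th power inside to obtain $\leq \tfrac12\langle(|T|^{4r}+|S|^{4r})x,x\rangle$. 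Taking the supremum over unit $x$ delivers $w^r(|S|^2|T|^2)\leq A/2$, completing the argument. Step (iii) is the only non-routine estimate; the rest of the proof is a clean assembly of Dragomir's inequality, the convex-function norm bound, and the above pointwise chain.
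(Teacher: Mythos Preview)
Your argument is correct and follows the same route as the paper: the first inequality is Theorem~\ref{th 1} with $\phi(t)=t^r$, and the second is obtained by combining \eqref{eq dragomir} with Lemma~\ref{convex op}. The only difference is cosmetic: your ingredient (iii), $w^r(|S|^2|T|^2)\le \tfrac12\||T|^{4r}+|S|^{4r}\|$, is itself just Dragomir's inequality \eqref{eq dragomir} applied with the pair $(|T|^2,|S|^2)$ in place of $(T,S)$, so the pointwise derivation you give, while valid, is unnecessary.
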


	\begin{proof}
	By considering the Orlicz function $\phi(t) = t^r$,  $r \geq 1$ in Theorem \ref{th 1} we get the first inequality. The second inequality follows from the first inequality by using (\ref{eq dragomir}) along with Lemma \ref{convex op}.
	\end{proof}

 The above result is also given in \cite[Theorem 2.10]{Nayak C-S}.
	
	\begin{cor} \label{N-1}
		Let $T, S \in \mathcal{B}(\mathcal{H}).$ Then
		\begin{eqnarray*}
		   w^{2}(S^*T) 
		   &\leq&  \frac{1}{3}\left\||T|^{2} + |S|^{2}\right\|w(T^*S) + \frac{1}{12} \left\||T|^{4} + |S|^{4}\right\| + \frac16 w(|S|^2|T|^2)\\
		   &\leq&  \frac{1}{3}\left\||T|^{2} + |S|^{2}\right\|w(T^*S) + \frac{1}{6} \left\||T|^{4} + |S|^{4}\right\|.
		  \end{eqnarray*}
	\end{cor}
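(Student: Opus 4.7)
The plan is to obtain Corollary \ref{N-1} as a direct specialisation of Theorem \ref{th 1}, using a carefully chosen Orlicz function and a constant well-defined function $f$. Specifically, I will take $\phi(t) = t$, which is trivially an Orlicz function (continuous, convex, strictly increasing, vanishing at $0$, and diverging at infinity) and is sub-multiplicative since $\phi(t_1 t_2) = t_1 t_2 = \phi(t_1)\phi(t_2)$. The hypothesis of Theorem \ref{th 1} is therefore satisfied, and substituting $\phi(t)=t$ collapses the inequality to
\[
w^2(T^*S) \leq \frac{1}{2(1+f(t))}\, w(T^*S)\,\bigl\||T|^2+|S|^2\bigr\| + \frac{f(t)}{2(1+f(t))}\, w(|S|^2|T|^2) + \frac{f(t)}{4(1+f(t))}\,\bigl\||T|^4+|S|^4\bigr\|.
\]
Since $f$ is arbitrary, I would then specialise to the constant value $f(t) = 1/2$. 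A direct arithmetic check gives the coefficients
\[
\frac{1}{2(1+1/2)} = \frac{1}{3}, \qquad \frac{1/2}{2(1+1/2)} = \frac{1}{6}, \qquad \frac{1/2}{4(1+1/2)} = \frac{1}{12},
\]
which matches the first line of the corollary after noting that $w(S^*T) = w((T^*S)^*) = w(T^*S)$.

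For the second inequality, the task reduces to showing
\[
\frac{1}{6}\, w(|S|^2|T|^2) \leq \frac{1}{12}\,\bigl\||T|^4+|S|^4\bigr\|,
\]
equivalently $w(|S|^2|T|^2) \leq \tfrac{1}{2}\bigl\||T|^4+|S|^4\bigr\|$. This follows immediately from Dragomir's inequality \eqref{eq dragomir} by applying it with $r=1$ to the positive operators $|T|^2$ and $|S|^2$ in place of $T$ and $S$, since $||T|^2|=|T|^2$ and $||S|^2|=|S|^2$. Adding the resulting estimate to the first inequality yields the second.

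The main obstacle, if any, is purely bookkeeping: verifying that the particular choice $f(t)\equiv 1/2$ gives the neat rational coefficients $1/3$, $1/6$, $1/12$, and recognising that Dragomir's bound can be reapplied to the absolute values of $T$ and $S$ squared. No further analytic input is required.
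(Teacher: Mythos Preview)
Your proposal is correct and follows exactly the same route as the paper: specialise Theorem \ref{th 1} with $\phi(t)=t$ and $f(t)=\tfrac12$ to get the first inequality, then apply Dragomir's bound \eqref{eq dragomir} (with $|T|^{2}$, $|S|^{2}$ in place of $T$, $S$) to pass to the second. Your added remarks on sub-multiplicativity of $\phi(t)=t$ and the identity $w(S^{*}T)=w(T^{*}S)$ are helpful details the paper leaves implicit.
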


	\begin{proof}
		By choosing the Orlicz function $\phi (t) =t$  and setting $f(t) = \frac{1}{2}$ in Theorem \ref{th 1}, we get the first inequality and the second inequality 
		follows using (\ref{eq dragomir}).
	\end{proof}

The second inequality is established by Kittaneh and Moradi \cite[Theorem 1]{Kittaneh MIA}. Therefore, the inequality in Theorem \ref{th 1} (also Corollary \ref{N-1}) generalizes and improves the existing bound given in \cite[Theorem 1]{Kittaneh MIA}.

	
	\begin{cor}\label{cor-11}
		Let $T, S \in \mathcal{B}(\mathcal{H})$. For $t \in (0,1)$, 
		\begin{eqnarray*}
			w^2(T^*S) &\leq& \frac{1}{2(1+t)}\left\||T|^2 + |S|^2 \right\| w(T^*S) + \frac{t}{4(1+t)} \left\| |T|^{4} + |S|^4 \right\|\\ && + \frac{t}{2(1+t)} w\left( |S|^2|T|^2\right)\\
			&\leq& \frac{1}{2(1+t)} \left\||T|^2 + |S|^2 \right\| w(T^*S) + \frac{t}{2(1+t)} \left\||T|^4 + |S|^4 \right\|.
		\end{eqnarray*}
	\end{cor}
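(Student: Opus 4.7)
The plan is to derive Corollary \ref{cor-11} as a direct specialization of Theorem \ref{th 1}, in the same spirit as Corollary \ref{N-1}, but keeping the free parameter $t$ in the auxiliary function rather than fixing $t=\tfrac12$.

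For the first inequality, I would apply Theorem \ref{th 1} with the particular Orlicz function $\phi(u) = u$ and the choice $f(t) = t$ on $(0,1)$. The identity map on $[0,\infty)$ is easily verified to be an Orlicz function (continuous, convex, increasing, vanishing at $0$, positive elsewhere, tending to $\infty$), and it is sub-multiplicative since $\phi(u_1 u_2) = u_1 u_2 = \phi(u_1)\phi(u_2)$; the function $f(t)=t$ is a well-defined map $(0,1)\to [0,\infty)$. Substituting into the conclusion of Theorem \ref{th 1} collapses every occurrence of $\phi$ to the identity, and the stated first inequality appears verbatim.

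For the second inequality, I would bound the middle term $w(|S|^2 |T|^2)$ using Dragomir's inequality \eqref{eq dragomir}. Writing $|S|^2|T|^2 = (|S|^2)^* |T|^2$ (valid since $|S|^2$ is self-adjoint) and applying \eqref{eq dragomir} with $r=1$, $T\mapsto |T|^2$, $S\mapsto |S|^2$, I obtain
\[
w\bigl(|S|^2|T|^2\bigr) \;\leq\; \frac{1}{2}\bigl\||T|^4 + |S|^4\bigr\|.
\]
Plugging this into the first inequality combines the last two terms as
\[
\frac{t}{2(1+t)}\cdot \frac{1}{2}\bigl\||T|^4+|S|^4\bigr\| + \frac{t}{4(1+t)}\bigl\||T|^4+|S|^4\bigr\| = \frac{t}{2(1+t)}\bigl\||T|^4+|S|^4\bigr\|,
\]
which produces the second inequality.

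There is no genuine obstacle in this argument: both inequalities are obtained by routine substitution into already-established results. The only points requiring a moment of care are the verification that $\phi(u)=u$ is sub-multiplicative (so that Theorem \ref{th 1} applies as stated) and the correct identification of operators when invoking \eqref{eq dragomir} to control $w(|S|^2|T|^2)$.
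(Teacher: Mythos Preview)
Your proposal is correct and follows essentially the same approach as the paper: specialize Theorem~\ref{th 1} with $\phi(u)=u$ and $f(t)=t$ to get the first inequality, then apply Dragomir's bound~\eqref{eq dragomir} to $w(|S|^2|T|^2)$ to absorb it into the $\||T|^4+|S|^4\|$ term and obtain the second inequality. The extra care you take in verifying sub-multiplicativity of $\phi$ and in identifying the operators for~\eqref{eq dragomir} is sound and simply makes explicit what the paper leaves implicit.
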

	\begin{proof}
		Considering the Orlicz function $\phi (t) = t$ and $f(t) =t$ in Theorem \ref{th 1}, we get the first inequality and
		the second inequality follows using (\ref{eq dragomir}).
	\end{proof}

The second inequality is given by Al-Dolat et al. \cite [Theorem 2.6] {filomat}.
Therefore, the inequality stated in Theorem \ref{th 1} (also Corollary \ref{cor-11}) extends and improves upon the inequality \cite [Theorem 2.6] {filomat}.

	If we consider the Orlicz function $\phi(t) = t^r,$ with $r \geq 1$ and  $f(t)=t$ in Theorem \ref{th 1}, we obtain the bound established by Nayak \cite[Theorem 2.16]{Nayak IIST}.
	\begin{cor} 
		Let $T, S \in \mathcal{B}(\mathcal{H}).$ Then for $t \in (0,1)$ and $r \geq 1,$
		\begin{eqnarray*}
			w^{2r}(T^*S) &\leq& \frac{1}{2(1+t)}\left\||T|^{2r} + |S|^{2r }\right\| w^r(T^*S) + \frac{t}{4(1+t)} \left\| |T|^{4r} + |S|^{4r} \right\|\\ && + \frac{t}{2(1+t)} w^r\left( |S|^{2}|T|^{2}\right).
		\end{eqnarray*}
	\end{cor}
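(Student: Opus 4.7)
The plan is to recognize that this corollary is an immediate specialization of Theorem \ref{th 1}, so the only work is to verify that the specific choices of $\phi$ and $f$ are admissible and then to read off the coefficients.

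First I would verify that $\phi(t)=t^r$ with $r\geq 1$ is a sub-multiplicative Orlicz function. Continuity, monotonicity, $\phi(0)=0$, $\phi(t)>0$ for $t>0$, and $\lim_{t\to\infty}\phi(t)=\infty$ are all clear. Convexity on $[0,\infty)$ holds because the second derivative $r(r-1)t^{r-2}$ is non-negative when $r\geq 1$. Sub-multiplicativity is in fact an equality here: $(t_1t_2)^r=t_1^rt_2^r=\phi(t_1)\phi(t_2)$. Similarly, $f(t)=t$ is a well-defined function from $(0,1)$ to $[0,\infty)$, so all hypotheses of Theorem \ref{th 1} are satisfied.

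Next I would simply substitute into the conclusion of Theorem \ref{th 1}. Since $\phi$ commutes with positive scalar powers on non-negative real arguments and with the functional calculus on positive operators, we have
\[
\phi(w^2(T^*S))=w^{2r}(T^*S),\quad \phi(w(T^*S))=w^r(T^*S),\quad \phi(w(|S|^2|T|^2))=w^r(|S|^2|T|^2),
\]
and $\phi(|T|^{2k})=|T|^{2kr}$, $\phi(|S|^{2k})=|S|^{2kr}$ for $k=1,2$. Thus $\|\phi(|T|^2)+\phi(|S|^2)\|=\||T|^{2r}+|S|^{2r}\|$ and $\|\phi(|T|^4)+\phi(|S|^4)\|=\||T|^{4r}+|S|^{4r}\|$. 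With $f(t)=t$ the coefficients $\frac{1}{2(1+f(t))}$, $\frac{f(t)}{2(1+f(t))}$, $\frac{f(t)}{4(1+f(t))}$ become $\frac{1}{2(1+t)}$, $\frac{t}{2(1+t)}$, $\frac{t}{4(1+t)}$, yielding exactly the claimed inequality.

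There is essentially no obstacle in this proof, since all substantive work was done in establishing Theorem \ref{th 1}; the only minor check is the sub-multiplicativity of $\phi(t)=t^r$, which holds trivially. The corollary is therefore a one-line deduction from the master theorem, and I would present it as such.
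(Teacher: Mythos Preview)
Your proposal is correct and matches the paper's own argument exactly: the paper states just before this corollary that one considers the Orlicz function $\phi(t)=t^r$ with $r\geq 1$ and $f(t)=t$ in Theorem~\ref{th 1}, and gives no further proof. Your additional verification that $\phi(t)=t^r$ is indeed a sub-multiplicative Orlicz function is a welcome (if routine) check that the paper leaves implicit.
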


	Our next theorem is as follows.
	
	\begin{theorem} \label{th 2}
			Let $T \in \mathcal{B}(\mathcal{H})$ and $g, h$ be two non-negative continuous functions on $[0, \infty) $ satisfying $g(t) h(t)=t$ for all $t\geq 0$. Then,  for a well-defined function $f:(0,1) \rightarrow [0,\infty)$ and for any sub-multiplicative Orlicz function $\phi$,
		\begin{eqnarray*}
		\phi\left(	w^2(T)  \right)  &\leq& \frac{f(t)}{4(1+ f(t))} \left\|\phi\left(g^4(|T|)\right) + \phi\left(h^4(|T^*|)\right)\right\|\\&& + \frac{f(t)}{2(1+ f(t))} \phi\left(w\left( h^2(|T^*|) g^2(|T|)\right)\right) \\&&+ \frac{1}{2(1+ f(t))} \phi\left(w(T)\right) \left\|\phi \left(g^2(|T|)\right) + \phi\left(h^2(|T^*|)\right) \right\|.
		\end{eqnarray*}
	\end{theorem}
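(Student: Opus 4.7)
The plan is to mimic the proof of Theorem \ref{th 1}, but with the mixed Schwarz inequality (Lemma \ref{mixed schwarz}) standing in for Cauchy--Schwarz. Under this substitution, the quantities $\|Sx\|$, $\|Tx\|$, and $|\langle T^*Sx,x\rangle|$ get replaced throughout by $\|g(|T|)x\|$, $\|h(|T^*|)x\|$, and $|\langle Tx,x\rangle|$, so that the final bound involves $w(T)$ and $w(h^2(|T^*|)g^2(|T|))$ rather than $w(T^*S)$ and $w(|S|^2|T|^2)$.

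Fix a unit vector $x\in\mathcal{H}$. Setting $y=x$ in Lemma \ref{mixed schwarz} gives $|\langle Tx,x\rangle|\leq\|g(|T|)x\|\,\|h(|T^*|)x\|$. Squaring this inequality yields $|\langle Tx,x\rangle|^2\leq\|g(|T|)x\|^2\|h(|T^*|)x\|^2$, while multiplying it by $|\langle Tx,x\rangle|$ gives $|\langle Tx,x\rangle|^2\leq|\langle Tx,x\rangle|\,\|g(|T|)x\|\,\|h(|T^*|)x\|$. A convex combination of these two with weights $\tfrac{f(t)}{1+f(t)}$ and $\tfrac{1}{1+f(t)}$ produces the analog of Lemma \ref{gen cauchy}:
\[
|\langle Tx,x\rangle|^2\leq \frac{1}{1+f(t)}|\langle Tx,x\rangle|\,\|g(|T|)x\|\,\|h(|T^*|)x\|+\frac{f(t)}{1+f(t)}\|g(|T|)x\|^2\|h(|T^*|)x\|^2.
\]
I then apply $\phi$ to both sides and split the right-hand side using its convexity. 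For the first summand, sub-multiplicativity peels $\phi(|\langle Tx,x\rangle|)$ off, and AM--GM combined with scalar convexity of $\phi$ and Lemma \ref{jenson} converts the remaining factor $\phi(\|g(|T|)x\|\,\|h(|T^*|)x\|)$ into $\left\langle\tfrac{\phi(g^2(|T|))+\phi(h^2(|T^*|))}{2}x,x\right\rangle$. For the second summand, I rewrite $\|g(|T|)x\|^2\|h(|T^*|)x\|^2=\langle g^2(|T|)x,x\rangle\,\langle x,h^2(|T^*|)x\rangle$ and apply Buzano's inequality (Lemma \ref{buzano}); one of the resulting pieces becomes $\left\langle\tfrac{\phi(g^4(|T|))+\phi(h^4(|T^*|))}{2}x,x\right\rangle$ after another round of AM--GM, convexity, and Lemma \ref{jenson}, while the other is exactly $\phi(|\langle h^2(|T^*|)g^2(|T|)x,x\rangle|)$. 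Taking the supremum over unit $x$ then promotes $|\langle Tx,x\rangle|\to w(T)$, $|\langle h^2(|T^*|)g^2(|T|)x,x\rangle|\to w(h^2(|T^*|)g^2(|T|))$, and each $\langle\,\cdot\, x,x\rangle$ expression to the operator norm of its underlying positive operator.

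The main difficulty is not conceptual but bookkeeping: every step must be routed through $\phi$ so that its convexity, monotonicity, and sub-multiplicativity are each called upon at the right moment, and at each instance of a sum of positive operators one must chain scalar convexity of $\phi$ with Lemma \ref{jenson} to move from $\phi\!\left(\left\langle\tfrac{A+B}{2}x,x\right\rangle\right)$ to $\left\langle\tfrac{\phi(A)+\phi(B)}{2}x,x\right\rangle$ without invoking operator convexity. The role of sub-multiplicativity is what allows $\phi(|\langle Tx,x\rangle|)$ to survive as a standalone factor and thereby produce the cross term $\phi(w(T))\,\|\phi(g^2(|T|))+\phi(h^2(|T^*|))\|$ in the stated bound.
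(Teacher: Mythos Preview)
Your proposal is correct and follows essentially the same route as the paper's own proof: start from the mixed Schwarz inequality (Lemma \ref{mixed schwarz}), form the convex split with weights $\tfrac{f(t)}{1+f(t)}$ and $\tfrac{1}{1+f(t)}$, apply Buzano (Lemma \ref{buzano}) to the quadratic piece and sub-multiplicativity to peel off $\phi(|\langle Tx,x\rangle|)$ from the cross piece, then push $\phi$ inside via AM--GM, scalar convexity, and Lemma \ref{jenson} before taking the supremum. The only cosmetic difference is that you first assemble the scalar inequality and then apply $\phi$, whereas the paper begins with the trivial identity $\phi(|\langle Tx,x\rangle|^2)=\phi\bigl(\tfrac{f(t)}{1+f(t)}|\langle Tx,x\rangle|^2+\tfrac{1}{1+f(t)}|\langle Tx,x\rangle|^2\bigr)$ and bounds each summand separately; the two orderings are equivalent.
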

	
	\begin{proof}
	Let $x \in \mathcal{H}$ be an unit vector.	Employing the convexity property of $\phi,$ we obtain
		\begin{eqnarray*}
			\phi \left(\left|\langle Tx, x \rangle\right|^2  \right) &=& \phi \left(\frac{f(t)}{1+f(t)} \left|\langle Tx, x \rangle\right|^2 + \frac{1}{1+f(t)} \left|\langle Tx, x \rangle\right|^2   \right)\\
			&\leq& \frac{f(t)}{1+f(t)} \phi \left( \left|\langle Tx, x \rangle\right|^2 \right) + \frac{1}{1+f(t)}\phi \left( \left|\langle Tx, x \rangle\right|^2 \right)\\
			&\leq& \frac{f(t)}{1+ f(t)} \phi \left( \left\langle g^2(|T|)x, x \right \rangle \left \langle h^2(|T^*|) x, x \right \rangle\right) \\ &&+ \frac{1}{1+ f(t)} \phi \left(|\langle Tx, x \rangle| \sqrt{\left\langle g^2(|T|)x, x \right \rangle \left \langle h^2(|T^*|) x, x \right \rangle} \right) \\
			&&\,\,\,\,\,\,\,\,\,\,\,\,\,\,\,\,\,\,\,\,\,\,\,\,\,\,\,\,\,\,\,\,\,\,\,\,\,\,\,\,\,\,\,\,\,\,\,\,\,\,\,\,\,\,\,\,\,\,\,\,\,\,\,\,\,\,\,\,\,\,\,\,\,\,\,\,\,\,\,\,\,\,\,\,\,\,\,\,\,\,\,\,\,\,\,~~(\mbox{using Lemma \ref{mixed schwarz}})\\
			&\leq& \frac{f(t)}{1+ f(t)} \phi \left(\frac{\left\|g^2(|T|)x \right\| \left\| h^2(|T^*|)x \right\| + |\left\langle g^2(|T|)x, h^2(|T^*|)x \right\rangle|}{2} \right) \\ && + \frac{1}{1+ f(t)} \phi \left(|\langle Tx, x \rangle| \right) \phi \left(\frac{ \left\langle g^2(|T|)x, x \right \rangle + \left \langle h^2(|T^*|) x, x \right \rangle}{2} \right)\\
			&&\,\,\,\,\,\,\,\,\,\,\,\,\,\,\,\,\,\,\,\,\,\,\,\,\,~~(\mbox{using Lemma \ref{buzano} and sub-multiplicity property of $\phi$})\\
			&\leq& \frac{f(t)}{2(1+ f(t))} \phi \left(\left\|g^2(|T|)x \right\| \left\| h^2(|T^*|)x \right\| \right)\\ && + \frac{f(t)}{2(1+ f(t))} \phi \left(|\left \langle h^2(|T^*|) g^2 (|T|) x, x \right \rangle| \right)\\ && + \frac{1}{2(1+ f(t))} \phi \left(|\langle Tx, x \rangle| \right) \left \langle \left\{ \phi \left(g^2(|T|)\right) + \phi \left(h^2(|T^*|)\right)\right\}x, x \right\rangle \\
			&\leq& \frac{f(t)}{2(1+ f(t))} \phi \left(\left\langle \left( \frac{g^4(|T|) + h^4(|T^*|)}{2} \right)x, x \right\rangle \right)\\ && + \frac{f(t)}{2(1+ f(t))} \phi \left(|\left \langle h^2(|T^*|) g^2 (|T|) x, x \right \rangle| \right)\\ &&  + \frac{1}{2(1+ f(t))} \phi \left(|\langle Tx, x \rangle| \right) \left \langle \left\{ \phi \left(g^2(|T|)\right) + \phi \left(h^2(|T^*|)\right)\right\}x, x \right\rangle 
				\end{eqnarray*}
			\begin{eqnarray*}
			&\leq& \frac{f(t)}{4(1+ f(t))} \left\langle \left(\phi\left(g^4(|T|)\right) + \phi\left(h^4(|T^*|) \right)\right) x, x  \right\rangle \\ && + \frac{f(t)}{2(1+ f(t))} \phi \left(\left \langle h^2(|T^*|) g^2 (|T|) x, x \right \rangle \right)\\ &&  + \frac{1}{2(1+ f(t))} \phi \left(|\langle Tx, x \rangle| \right) \left \langle \left\{ \phi \left(g^2(|T|)\right) + \phi \left(h^2(|T^*|)\right)\right\}x, x \right\rangle \\
			&\leq& \frac{f(t)}{4(1+ f(t))} \left\|\phi\left(g^4(|T|)\right) + \phi\left(h^4(|T^*|)\right)\right\|\\&& + \frac{f(t)}{2(1+ f(t))} \phi\left(w\left( h^2(|T^*|) g^2(|T|)\right)\right) \\&&+ \frac{1}{2(1+ f(t))} \phi\left(w(T)\right) \left\|\phi \left(g^2(|T|)\right) + \phi\left(h^2(|T^*|)\right) \right\|.
		\end{eqnarray*}
			By taking the supremum over all $x$ with $\|x\|=1$,  we obtain our required inequality.
	\end{proof}
 
	In particular, if we consider $\phi(t) = t, t\geq 0,$ then we get the following result.
 
	\begin{cor}
			Let $T \in \mathcal{B}(\mathcal{H})$ and $g, h$ be two non-negative continuous functions on $[0, \infty) $ satisfying $g(t) h(t)=t$ for all $t\geq 0.$ Then,  for a well-defined function $f:(0,1) \rightarrow [0, \infty),$ \begin{eqnarray*}
			w^2(T) &\leq& \frac{f(t)}{4(1+ f(t))} \left\|g^4(|T|) + h^4(|T^*|)\right\| + \frac{f(t)}{2(1+ f(t))}w\left( h^2(|T^*|) g^2(|T|)\right) \\&&+ \frac{1}{2(1+ f(t))} w(T) \left\|g^2(|T|) + h^2(|T^*|) \right\|.
		\end{eqnarray*}
	\end{cor}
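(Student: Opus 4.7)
The plan is to derive the corollary as an immediate specialization of Theorem \ref{th 2}, by choosing the Orlicz function $\phi(t) = t$ for all $t \geq 0$. First I would verify that this $\phi$ meets the hypotheses of the theorem: it is continuous, convex (being affine), strictly increasing, with $\phi(0)=0$, $\phi(t)>0$ for $t>0$, and $\lim_{t\to\infty}\phi(t)=\infty$, so it qualifies as an Orlicz function; sub-multiplicativity holds trivially, and in fact with equality, since $\phi(t_1 t_2) = t_1 t_2 = \phi(t_1)\phi(t_2)$.

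Next I would substitute $\phi(t) = t$ directly into the inequality provided by Theorem \ref{th 2}. Every instance of $\phi(\cdot)$ collapses to the identity: the left-hand side $\phi(w^2(T))$ becomes $w^2(T)$, while on the right-hand side the quantities $\phi(g^4(|T|))$, $\phi(h^4(|T^*|))$, $\phi(w(h^2(|T^*|)g^2(|T|)))$, $\phi(w(T))$, $\phi(g^2(|T|))$ and $\phi(h^2(|T^*|))$ become $g^4(|T|)$, $h^4(|T^*|)$, $w(h^2(|T^*|)g^2(|T|))$, $w(T)$, $g^2(|T|)$ and $h^2(|T^*|)$ respectively. Reading off the three resulting summands, one obtains verbatim the inequality asserted by the corollary.

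Since the statement is a direct instantiation of the more general theorem, there is no substantive obstacle to overcome; the only step requiring any thought is checking that the identity $\phi(t) = t$ is a sub-multiplicative Orlicz function, which is entirely routine.
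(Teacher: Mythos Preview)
Your proposal is correct and matches the paper's own approach exactly: the corollary is obtained simply by taking $\phi(t)=t$ in Theorem~\ref{th 2}, and your verification that this choice is a sub-multiplicative Orlicz function is routine and accurate.
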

	
This result also proved in \cite[Theorem 2.16]{Nayak C-S}.

	\begin{cor}\label{cor-22}
			Let $T \in \mathcal{B}(\mathcal{H}),$ then \begin{eqnarray*}
			w^2(T) &\leq& \frac{1}{12} \left\||T|^2 + |T^*|^2\right\| + \frac{1}{6} w\left(|T^*||T|\right) + \frac{1}{3} w(T) \left\||T| + |T^*|\right\|\\
			&\leq& \frac{1}{6} \left\||T|^2 + |T^*|^2\right\|+ \frac{1}{3} w(T) \left\||T| + |T^*|\right\|.
		\end{eqnarray*}
	\end{cor}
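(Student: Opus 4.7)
The plan is to obtain Corollary \ref{cor-22} as a direct specialization of Theorem \ref{th 2}, followed by an application of Dragomir's inequality \eqref{eq dragomir} to simplify the middle term.

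First I would choose, in the statement of Theorem \ref{th 2}, the Orlicz function $\phi(t)=t$, the pair of continuous functions $g(t)=h(t)=\sqrt{t}$ (which trivially satisfies $g(t)h(t)=t$), and the constant selector $f(t)=\tfrac{1}{2}$. With these choices we have $g^{2}(|T|)=|T|$, $h^{2}(|T^{*}|)=|T^{*}|$, $g^{4}(|T|)=|T|^{2}$, $h^{4}(|T^{*}|)=|T^{*}|^{2}$, and the three coefficients reduce to
\[
\frac{f(t)}{4(1+f(t))}=\frac{1}{12},\qquad \frac{f(t)}{2(1+f(t))}=\frac{1}{6},\qquad \frac{1}{2(1+f(t))}=\frac{1}{3}.
\]
Substituting these into Theorem \ref{th 2} immediately yields the first inequality of the corollary.

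For the second inequality, it suffices to absorb the term $\tfrac{1}{6}w(|T^{*}||T|)$ into $\tfrac{1}{12}\||T|^{2}+|T^{*}|^{2}\|$. To do this, I would apply the Dragomir-type bound \eqref{eq dragomir} with $r=1$, taking $S=|T^{*}|$ and $T\mapsto|T|$ in that inequality; since $|T^{*}|$ and $|T|$ are self-adjoint, $S^{*}T=|T^{*}||T|$ and $|S|^{2}=|T^{*}|^{2}$, $|T|^{2}=|T|^{2}$, giving
\[
w(|T^{*}||T|)\;\leq\;\tfrac{1}{2}\bigl\||T|^{2}+|T^{*}|^{2}\bigr\|.
\]
Multiplying by $\tfrac{1}{6}$ and adding to the first term produces $\tfrac{1}{12}+\tfrac{1}{12}=\tfrac{1}{6}$ as the coefficient of $\||T|^{2}+|T^{*}|^{2}\|$, while the last term is unchanged.

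There is essentially no obstacle here: once Theorem \ref{th 2} is in hand the corollary is a bookkeeping exercise in choosing $\phi$, $g$, $h$, and $f(t)$. The only place where care is needed is the application of \eqref{eq dragomir} to the mixed product $|T^{*}||T|$, where one must recognize $|T^{*}||T|$ as an $S^{*}T$ with $S,T$ self-adjoint in order to invoke the inequality cleanly.
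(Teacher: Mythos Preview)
Your proposal is correct and follows exactly the same route as the paper's own proof: specialize Theorem \ref{th 2} with $\phi(t)=t$, $g(t)=h(t)=\sqrt{t}$, $f(t)=\tfrac{1}{2}$ to obtain the first inequality, and then invoke \eqref{eq dragomir} with $r=1$ on $|T^{*}||T|$ to deduce the second.
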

 
	\begin{proof}
			Taking  $\phi(t)= t$, $g(t) = h(t) = \sqrt{t}$ and $ f(t) = \frac{1}{2}$ in Theorem \ref{th 2}, we obtain
		\begin{eqnarray*}
			w^2(T) &\leq& \frac{1}{12} \left\||T|^2 + |T^*|^2\right\| + \frac{1}{6} w\left(|T^*||T|\right) + \frac{1}{3} w(T) \left\||T| + |T^*|\right\|\\
			&\leq& \frac{1}{6} \left\||T|^2 + |T^*|^2\right\|+ \frac{1}{3} w(T) \left\||T| + |T^*|\right\|~~(\mbox{using inequality (\ref{eq dragomir})}).
		\end{eqnarray*}
	\end{proof}

 The second bound proved  by Kittaneh et al.  \cite[Theorem 2]{Kittaneh MIA}.
	Hence, the inequality established in Theorem \ref{th 2} (also Corollary \ref{cor-22}) extends and improves the previous upper bound  \cite[Theorem 2]{Kittaneh MIA}.
 
Bhunia and Paul \cite[Th. 2.11]{pintu rim} proved that 
	 for any $\alpha \in [0,1]$ and for $r\geq 1,$
	\begin{eqnarray*}
		w^{2r}(T) &\leq& \frac{\alpha}{2} w^r(T^2) + \left \| \frac{\alpha}{4} |T|^{2r} + \left(1-\frac{3 \alpha}{4}\right) |T^*|^{2r} \right\|
	\end{eqnarray*}
	and 	\begin{eqnarray*}
		w^{2r}(T) &\leq& \frac{\alpha}{2} w^r(T^2) + \left \| \frac{\alpha}{4} |T^*|^{2r} + \left(1-\frac{3 \alpha}{4}\right) |T|^{2r} \right\|.
	\end{eqnarray*}
	We now generalize the inequalities via Orlicz function.

	\begin{theorem} \label{th 3}
		Let $T \in \mathcal{B}(\mathcal{H}).$ Then for any Orlicz function $\phi$ and for $\alpha \in [0,1]$,
		\begin{eqnarray*}
			\phi \left( w^2(T)\right) &\leq& \frac{\alpha}{2} \phi \left(w(T^2)\right) + \left\|\frac{\alpha}{4} \phi\left(|T|^2\right) + \left( 1- \frac{3 \alpha}{4}\right) \phi \left(|T^*|^2\right)\right\|
		\end{eqnarray*}
		and 	\begin{eqnarray*}
			\phi \left( w^2(T)\right) &\leq& \frac{\alpha}{2} \phi \left(w(T^2)\right) + w\left\|\frac{\alpha}{4} \phi\left(|T^*|^2\right) + \left( 1- \frac{3 \alpha}{4}\right) \phi \left(|T|^2\right)\right\|.
		\end{eqnarray*}
	\end{theorem}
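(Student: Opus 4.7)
The plan is to derive both inequalities from a single pointwise scalar estimate for $|\langle Tx, x\rangle|^2$ that exhibits the right-hand side as a convex combination of three non-negative quantities, and then push $\phi$ through using its monotonicity and convexity together with the operator Jensen inequality (Lemma \ref{jenson}).

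First I would establish the pointwise inequality. For a unit vector $x \in \mathcal{H}$, I rewrite $|\langle Tx, x\rangle|^2 = |\langle Tx, x\rangle \langle x, T^*x\rangle|$ and apply Buzano's inequality (Lemma \ref{buzano}) with the identity $\langle Tx, T^*x\rangle = \langle T^2 x, x\rangle$, followed by the AM--GM bound $\|Tx\|\|T^*x\| \leq \tfrac{1}{2}(\langle |T|^2 x, x\rangle + \langle |T^*|^2 x, x\rangle)$, to get
\[
|\langle Tx, x\rangle|^2 \;\leq\; \tfrac{1}{2}|\langle T^2 x, x\rangle| + \tfrac{1}{4}\langle |T|^2 x, x\rangle + \tfrac{1}{4}\langle |T^*|^2 x, x\rangle.
\]
Multiplying this by $\alpha$ and adding the trivial bound $(1-\alpha)|\langle Tx, x\rangle|^2 \leq (1-\alpha)\langle |T^*|^2 x, x\rangle$ yields
\[
|\langle Tx, x\rangle|^2 \;\leq\; \tfrac{\alpha}{2}|\langle T^2 x, x\rangle| + \tfrac{\alpha}{4}\langle |T|^2 x, x\rangle + \bigl(1 - \tfrac{3\alpha}{4}\bigr)\langle |T^*|^2 x, x\rangle,
\]
and the three weights sum to $1$, so the right-hand side is a genuine convex combination.

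Next I apply $\phi$ to both sides. Since $\phi$ is increasing and convex, Jensen for scalars distributes $\phi$ over the convex combination, giving
\[
\phi\bigl(|\langle Tx, x\rangle|^2\bigr) \;\leq\; \tfrac{\alpha}{2}\phi\bigl(|\langle T^2 x, x\rangle|\bigr) + \tfrac{\alpha}{4}\phi\bigl(\langle |T|^2 x, x\rangle\bigr) + \bigl(1 - \tfrac{3\alpha}{4}\bigr)\phi\bigl(\langle |T^*|^2 x, x\rangle\bigr).
\]
For the middle two terms, Lemma \ref{jenson} applied to the positive (hence self-adjoint) operators $|T|^2$ and $|T^*|^2$ with convex $\phi$ gives $\phi(\langle |T|^2 x, x\rangle) \leq \langle \phi(|T|^2)x, x\rangle$ and likewise for $|T^*|^2$. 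Taking the supremum over unit vectors $x$, using monotonicity of $\phi$ to bring it outside the suprema on the left and on the $T^2$ term, and using $\|A\| = \sup_{\|x\|=1}\langle Ax, x\rangle$ for the positive operator $\tfrac{\alpha}{4}\phi(|T|^2) + (1-\tfrac{3\alpha}{4})\phi(|T^*|^2)$, yields the first displayed inequality. The symmetric inequality follows by the identical argument after replacing the trivial bound $(1-\alpha)|\langle Tx, x\rangle|^2 \leq (1-\alpha)\langle |T^*|^2 x, x\rangle$ by $(1-\alpha)|\langle Tx, x\rangle|^2 \leq (1-\alpha)\langle |T|^2 x, x\rangle$.

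The main obstacle is really the first step: choosing the interpolation between the Buzano--AM--GM estimate and the trivial Schwarz bound so that the resulting three coefficients add to exactly $1$. This is precisely what allows the convexity of $\phi$ to distribute cleanly; once this delicate bookkeeping of weights is in place, the remaining steps are routine applications of Lemma \ref{jenson} and the definitions of $w(\cdot)$ and $\|\cdot\|$. Note that, in contrast to Theorems \ref{th 1} and \ref{th 2}, no sub-multiplicativity of $\phi$ is needed, since $\phi$ is never applied to a product.
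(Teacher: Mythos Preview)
Your proof is correct and follows essentially the same route as the paper's: Buzano's inequality combined with the AM--GM bound on $\|Tx\|\|T^*x\|$, interpolated via the parameter $\alpha$ against the trivial Cauchy--Schwarz estimate $|\langle Tx,x\rangle|^2\le\langle|T^*|^2x,x\rangle$, and then convexity of $\phi$ together with the operator Jensen inequality (Lemma~\ref{jenson}). The only cosmetic difference is order of operations---you first assemble the full scalar convex combination and then apply $\phi$ once, whereas the paper applies $\phi$ at the outset and distributes through convexity step by step; for the second inequality the paper simply replaces $T$ by $T^*$, which amounts to the same swap you perform.
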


	\begin{proof}
	Substituting $x$ with $Tx$, $y$ with $T^*x$ and $e$ with $x$,  $\|x\|=1$ in Lemma \ref{buzano}, yields
	\begin{equation} \label{buz deduction}
	\left|\langle Tx, x \rangle \right|^2 \leq \frac{1}{2} \left(|\langle T^2 x, x \rangle| + \|Tx\| \|T^*x\| \right) .\end{equation}
	
	Using the convexity property of $\phi,$ we get
	\begin{eqnarray*}
		\phi\left(|\langle Tx, x \rangle|^2\right) &=& \phi \left(\alpha |\langle Tx, x \rangle|^2 + (1-\alpha) |\langle Tx, x \rangle|^2 \right)\\ &\leq& \alpha \phi \left(|\langle Tx, x \rangle|^2\right) + (1-\alpha) \phi \left(\|T^*x\|^2 \right)\\
		&\leq& \alpha \phi \left(\frac{1}{2} \left(|\langle T^2 x, x \rangle| + \|Tx\| \|T^*x\| \right) \right)+ (1-\alpha) \phi \left(\langle |T^*|^2x, x \rangle \right)\\
		&&\,\,\,\,\,\,\,\,\,\,\,\,\,\,\,\,\,\,\,\,\,\,\,\,\,\,\,\,\,\,\,\,\,\,\,\,\,\,\,\,\,\,\,\,\,\,\,\,\,\,\,\,\,\,\,\,\,\,\,\,\,\,\,\,\,\,\,\,\,\,\,\,\,\,\,\,\,\,\,\,\,~~(\mbox{using inequality (\ref{buz deduction})})\\
		 &\leq& \frac{\alpha}{2} \phi\left(|\langle T^2x, x \rangle| \right) + \frac{\alpha}{2} \phi \left(\frac{\|Tx\|^2 + \|T^*x\|^2}{2} \right)\\ && + (1-\alpha)  \phi \left(\langle |T^*|^2x, x \rangle \right)\\ 
   &\leq&  \frac{\alpha}{2} \phi\left(|\langle T^2x, x \rangle| \right) + \frac{\alpha}{2} \phi \left(\left\langle \left(\frac{|T|^2 + |T^*|^2}{2}\right)x, x  \right\rangle \right)\\ && +  (1-\alpha)  \phi \left(\langle |T^*|^2x, x \rangle \right)\\ 
   &\leq&  \frac{\alpha}{2} \phi\left(|\langle T^2x, x \rangle| \right) + \left\langle \left(\frac{\alpha}{4} \phi \left(|T|^2 \right) + \left( 1-\frac{3 \alpha}{4}\right) \phi \left(|T^*|^2\right) \right)x, x\right\rangle\\ &\leq&  \frac{\alpha}{2} \phi \left(w(T^2)\right) + \left\|\frac{\alpha}{4} \phi\left(|T|^2\right) + \left( 1- \frac{3 \alpha}{4}\right) \phi \left(|T^*|^2\right)\right\|.
		\end{eqnarray*}
		Now taking the supremum over all $x$ with $\|x\|=1,$ we obtain the first inequality.
		For the second inequality replace $T$ by $T^*$ in the first inequality.
	\end{proof}

Next theorem reads as 
 
	\begin{theorem}\label{th 4}
			Let $T \in \mathcal{B}(\mathcal{H})$ and $g,h$ be non-negative continuous functions on $[0, \infty)$ satisfying $g(t) h(t)=t,$ $\forall$ $t\geq 0.$ Then for any Orlicz function $\phi$ and for any $\alpha \in [0,1]$,
			\[\phi\left(w^2(T) \right) \leq  \left\|\frac{\alpha}{2}\left[ \phi \left(g^4(|T|)\right) + \phi \left(h^4(|T^*|) \right)\right] + (1-\alpha) \phi \left(|T|^2\right)\right\| \]
			and 
				\[\phi\left(w^2(T) \right) \leq  \left\|\frac{\alpha}{2}\left[ \phi \left(g^4(|T^*|)\right) + \phi \left(h^4(|T|) \right)\right] + (1-\alpha) \phi \left(|T^*|^2\right)\right\| .\]
	\end{theorem}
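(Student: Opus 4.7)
The plan is to parallel the architecture of the proof of Theorem \ref{th 3}, replacing the Buzano-type deduction used there by the mixed Schwarz inequality (Lemma \ref{mixed schwarz}) adapted to the factorisation $g(t)h(t) = t$. Fix a unit vector $x \in \mathcal{H}$. First I would apply Lemma \ref{mixed schwarz} with $y = x$, $\psi = g$, $\eta = h$ to obtain $|\langle Tx, x\rangle| \leq \|g(|T|)x\|\,\|h(|T^*|)x\|$; squaring and rewriting each squared norm as $\langle g^2(|T|)x, x\rangle$ and $\langle h^2(|T^*|)x, x\rangle$, then applying the scalar AM--GM inequality $ab \leq (a^2+b^2)/2$ followed by Lemma \ref{positive op} (with exponent $2$) to pull each square inside the inner product, yields the master estimate
\[
|\langle Tx, x\rangle|^2 \;\leq\; \tfrac{1}{2}\bigl\langle \bigl(g^4(|T|) + h^4(|T^*|)\bigr) x, x\bigr\rangle.
\]

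Next I would split $|\langle Tx, x\rangle|^2 = \alpha |\langle Tx, x\rangle|^2 + (1-\alpha) |\langle Tx, x\rangle|^2$, bounding the first copy by the master estimate above and the second by the trivial Cauchy--Schwarz bound $|\langle Tx, x\rangle|^2 \leq \|Tx\|^2 = \langle |T|^2 x, x\rangle$. Since $\phi$ is increasing and convex,
\[
\phi\bigl(|\langle Tx, x\rangle|^2\bigr) \;\leq\; \alpha\,\phi\!\bigl(\tfrac{1}{2}\langle (g^4(|T|)+h^4(|T^*|))x, x\rangle\bigr) + (1-\alpha)\,\phi\bigl(\langle |T|^2 x, x\rangle\bigr).
\]
A second use of scalar convexity splits the first term as $\tfrac{\alpha}{2}\phi(\langle g^4(|T|)x, x\rangle) + \tfrac{\alpha}{2}\phi(\langle h^4(|T^*|)x, x\rangle)$, and then Lemma \ref{jenson} (operator Jensen), applied on the positive operators $g^4(|T|)$, $h^4(|T^*|)$, $|T|^2$, moves $\phi$ inside each inner product to give
\[
\phi\bigl(|\langle Tx, x\rangle|^2\bigr) \;\leq\; \left\langle \Bigl[\tfrac{\alpha}{2}\bigl(\phi(g^4(|T|)) + \phi(h^4(|T^*|))\bigr) + (1-\alpha)\phi(|T|^2)\Bigr] x, x\right\rangle,
\]
and the right-hand side is at most the operator norm of the bracketed positive operator.

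Finally, taking the supremum over unit vectors $x$ and using the continuity and monotonicity of $\phi$ (so that $\sup_x \phi(|\langle Tx, x\rangle|^2) = \phi(w^2(T))$) yields the first displayed inequality. The second follows by applying the first to $T^*$, which exchanges the roles of $|T|$ and $|T^*|$ while preserving $w(T)$. I do not expect a serious obstacle, since every ingredient --- Lemma \ref{mixed schwarz}, AM--GM, Lemma \ref{positive op}, convexity of $\phi$, and Lemma \ref{jenson} --- is already deployed in the proof of Theorem \ref{th 3}. The only genuine judgement call is that in the initial split the $\alpha$ weight must sit on the mixed-Schwarz branch (which, after Lemma \ref{positive op}, produces exactly the fourth powers $g^4, h^4$) and $1-\alpha$ on the trivial branch, so that the final bracketed operator matches the target right-hand side.
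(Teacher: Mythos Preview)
Your proposal is correct and follows essentially the same route as the paper: split $|\langle Tx,x\rangle|^2$ with weights $\alpha,1-\alpha$, apply the mixed Schwarz inequality and AM--GM/McCarthy on the $\alpha$-branch and the trivial Cauchy--Schwarz bound on the $(1-\alpha)$-branch, then use operator Jensen to push $\phi$ inside and take the operator norm. The only cosmetic difference is that the paper bounds the $(1-\alpha)$-branch by $\|T^*x\|^2=\langle |T^*|^2x,x\rangle$ rather than your $\|Tx\|^2=\langle |T|^2x,x\rangle$, so it arrives at the two displayed inequalities in the opposite order; either choice works and the other follows by $T\mapsto T^*$.
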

	\begin{proof}
	Let $x \in \mathcal{H}$ with $\|x\|=1.$	Then by using the convexity property of $\phi,$ we have 
	\begin{eqnarray*}
		\phi \left(|\langle Tx, x \rangle|^2 \right) &=& \phi \left(\alpha |\langle Tx, x \rangle|^2 + (1-\alpha) |\langle Tx, x \rangle|^2\right) \\ &\leq& \alpha \phi \left(|\langle Tx, x \rangle|^2\right) + (1-\alpha) \phi \left(\|T^*x\|^2\right)\\
		&\leq& \alpha \phi \left(\langle g^2(|T|)x,x \rangle \langle h^2(|T^*|)x, x \rangle  \right) + (1-\alpha) \langle |T^*|^2 x, x \rangle\\ 
		&&\,\,\,\,\,\,\,\,\,\,\,\,\,\,\,\,\,\,\,\,\,\,\,\,\,\,\,\,\,\,\,\,\,\,\,\,\,\,\,\,\,\,\,\,\,\,\,\,\,\,\,\,\,\,\,\,\,\,\,\,\,\,\,\,\,\,\,\,\,\,~~(\mbox{using Lemma \ref{mixed schwarz}})\\
		&\leq& \alpha \phi \left(\left\langle\left(\frac{g^4(|T|) + h^4(|T^*|)}{2}\right)x, x \right\rangle \right) + (1-\alpha) \phi \left(\langle |T^*|^2x, x \rangle\right)\\
		&\leq& \frac{\alpha}{2} \left\langle \left\{\phi\left(g^4(|T|)\right) + \phi\left(h^4(|T^*|)\right)\right\}x, x  \right\rangle + (1-\alpha) \langle \phi\left(|T^*|^2\right) x, x \rangle\\ 
		&=& \left\langle \left\{\frac{\alpha}{2} \left(\phi\left(g^4(|T|)\right) + \phi\left(h^4(|T^*|)\right)\right) + (1-\alpha) \phi \left(|T^*|^2\right) \right\}x, x \right\rangle
		\\ &\leq&  \left\|\frac{\alpha}{2}\left[\phi \left(g^4(|T|)\right) + \phi \left(h^4(|T^*|) \right)\right] + (1-\alpha) \phi \left(|T^*|^2\right)\right\|.
	\end{eqnarray*}
	Now taking the supremum over all $x$ with $\|x\|=1,$ we obtain the first inequality. Second inequality can be obtained by replacing $T$ with $T^*$ in first inequality.	
	\end{proof}

In particular, taking $g(t)=h(t)=\sqrt{t}$ and $\alpha=1$ in Theorem \ref{th 4}, we obtain

\begin{cor}
    Let $T \in \mathcal{B}(\mathcal{H})$. Then for any Orlicz function $\phi$,
			\[\phi\left(w^2(T) \right) \leq \frac12 \left\|  \phi \left(|T|^2\right) + \phi \left(|T^*|^2 \right)  \right\|.\]
   \end{cor}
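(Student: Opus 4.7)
The plan is to derive the corollary as a direct specialization of Theorem \ref{th 4}, so essentially no fresh ideas are required beyond a clean substitution. First, I would verify that the pair $g(t)=h(t)=\sqrt{t}$ is admissible for Theorem \ref{th 4}: both are non-negative and continuous on $[0,\infty)$ and satisfy the factorization hypothesis $g(t)h(t)=\sqrt{t}\cdot\sqrt{t}=t$ for all $t\geq 0$. Via the functional calculus this gives $g^{2}(|T|)=|T|$ and $h^{2}(|T^{*}|)=|T^{*}|$, and consequently $g^{4}(|T|)=|T|^{2}$ and $h^{4}(|T^{*}|)=(|T^{*}|)^{2}=|T^{*}|^{2}$.

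Next I would set $\alpha=1$ in the first inequality of Theorem \ref{th 4}. This collapses the coefficient of the $(1-\alpha)$ term to zero, eliminating that summand entirely and leaving
\[
\phi\!\left(w^{2}(T)\right)\leq\left\|\tfrac{1}{2}\!\left[\phi\!\left(g^{4}(|T|)\right)+\phi\!\left(h^{4}(|T^{*}|)\right)\right]\right\|.
\]
Substituting the computed values $g^{4}(|T|)=|T|^{2}$ and $h^{4}(|T^{*}|)=|T^{*}|^{2}$ and pulling the scalar $1/2$ outside the norm yields exactly
\[
\phi\!\left(w^{2}(T)\right)\leq\tfrac{1}{2}\left\|\phi\!\left(|T|^{2}\right)+\phi\!\left(|T^{*}|^{2}\right)\right\|,
\]
which is the claim.

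There is no real obstacle here; the only thing to be careful about is consistency with the proof of Theorem \ref{th 4}, where the leftover $(1-\alpha)$ term involves $\phi(|T^{*}|^{2})$ (coming from the bound $|\langle Tx,x\rangle|\leq\|T^{*}x\|$). Since $\alpha=1$ kills this term regardless, no ambiguity propagates to the corollary. Thus the statement follows immediately from Theorem \ref{th 4}, and the same reasoning applied to the second inequality of Theorem \ref{th 4} (swapping $T\leftrightarrow T^{*}$) would give the identical bound, confirming the symmetry of the result.
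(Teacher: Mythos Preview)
Your proposal is correct and matches the paper's own derivation exactly: the corollary is stated immediately after Theorem \ref{th 4} with the remark ``In particular, taking $g(t)=h(t)=\sqrt{t}$ and $\alpha=1$ in Theorem \ref{th 4}, we obtain\ldots''. Your observation about the harmless discrepancy between $\phi(|T|^{2})$ and $\phi(|T^{*}|^{2})$ in the $(1-\alpha)$ term is apt but, as you note, irrelevant once $\alpha=1$.
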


In \cite[Theorem 2.4]{pintu gmj}, it is given that
	\begin{eqnarray*}
			w^2(T) &\leq& \frac{1}{4}  w^2\left(|T| + i |T^*|\right) + \frac{1}{4} w \left(|T| |T^*|\right) + \frac{1}{8} \left\| |T|^2 + |T^*|^2 \right\|.
		\end{eqnarray*}
We now extend the above inequality via Orlicz function.
 
	\begin{theorem} \label{th 5}
		Let $T \in \mathcal{B}(\mathcal{H}).$ Then for any Orlicz function $\phi,$
		\begin{eqnarray*}
			\phi\left(w^2(T)\right) &\leq& \frac{1}{2} \phi\left(\frac12w^2\left(|T| + i |T^*|\right)\right) + \frac{1}{4} \phi\left(w\left(|T||T^*|\right)\right) + \frac{1}{8} \left\|\phi(|T|^2) + \phi(|T^*|^2) \right\|.
		\end{eqnarray*}
	\end{theorem}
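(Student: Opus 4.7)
\smallskip\noindent\textbf{Proof sketch.} The plan is to reduce the statement to the scalar pointwise inequality underlying \cite[Theorem 2.4]{pintu gmj} and then lift it through $\phi$ using convexity together with the operator Jensen inequality (Lemma \ref{jenson}). The target right-hand side is a convex combination with weights $\tfrac12,\tfrac14,\tfrac18,\tfrac18$ (summing to $1$), which dictates how the pointwise bound must be decomposed before $\phi$ is applied.

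For the pointwise step, fix a unit vector $x \in \mathcal{H}$. The mixed Schwarz inequality (Lemma \ref{mixed schwarz}) with $\psi(t)=\eta(t)=\sqrt{t}$ and $y=x$ gives $|\langle Tx, x\rangle|^2 \leq \langle|T|x, x\rangle\langle|T^*|x, x\rangle$. Split the right side as $\tfrac12+\tfrac12$ of itself. On the first half, $2ab\leq a^2+b^2$ combined with the identity $\langle|T|x, x\rangle^2+\langle|T^*|x, x\rangle^2=|\langle(|T|+i|T^*|)x, x\rangle|^2$ (valid because $|T|$ and $|T^*|$ are positive self-adjoint) contributes $\tfrac14|\langle(|T|+i|T^*|)x, x\rangle|^2$. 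On the second half, Buzano's inequality (Lemma \ref{buzano}) applied with vectors $|T|x$, $|T^*|x$ and unit vector $x$, followed by AM--GM on $\||T|x\|\,\||T^*|x\|=\sqrt{\langle|T|^2x, x\rangle\langle|T^*|^2x, x\rangle}$, contributes $\tfrac14|\langle|T||T^*|x, x\rangle|+\tfrac18\langle(|T|^2+|T^*|^2)x, x\rangle$. Summing,
\[ |\langle Tx, x\rangle|^2 \leq \tfrac14|\langle(|T|+i|T^*|)x, x\rangle|^2 + \tfrac14|\langle|T||T^*|x, x\rangle| + \tfrac18\langle(|T|^2+|T^*|^2)x, x\rangle. \]

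Rewriting the right side as $\tfrac12\!\cdot\!\bigl(\tfrac12|\langle(|T|+i|T^*|)x, x\rangle|^2\bigr) + \tfrac14|\langle|T||T^*|x, x\rangle| + \tfrac18\langle|T|^2x, x\rangle + \tfrac18\langle|T^*|^2x, x\rangle$ exhibits a convex combination. Monotonicity of $\phi$ and its scalar Jensen inequality then give
\[ \phi(|\langle Tx, x\rangle|^2) \leq \tfrac12\phi\!\bigl(\tfrac12|\langle(|T|+i|T^*|)x, x\rangle|^2\bigr) + \tfrac14\phi(|\langle|T||T^*|x, x\rangle|) + \tfrac18\phi(\langle|T|^2x, x\rangle) + \tfrac18\phi(\langle|T^*|^2x, x\rangle). \]
The operator Jensen inequality (Lemma \ref{jenson}) dominates the last two $\phi$-terms by $\langle\phi(|T|^2)x, x\rangle$ and $\langle\phi(|T^*|^2)x, x\rangle$, and monotonicity of $\phi$ dominates the first two by $\phi\bigl(\tfrac12 w^2(|T|+i|T^*|)\bigr)$ and $\phi(w(|T||T^*|))$. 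Taking the supremum over $\|x\|=1$ and using $\sup_{\|x\|=1}\langle Ax, x\rangle=\|A\|$ for the positive operator $\phi(|T|^2)+\phi(|T^*|^2)$ yields the claimed bound. The only non-routine step is choosing the pointwise decomposition so the four weights hit $\tfrac12,\tfrac14,\tfrac18,\tfrac18$; after that, the extension is a routine application of convexity and Lemma \ref{jenson}.
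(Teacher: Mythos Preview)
Your proof is correct and follows essentially the same route as the paper's: mixed Schwarz, the identity $\langle|T|x,x\rangle^2+\langle|T^*|x,x\rangle^2=|\langle(|T|+i|T^*|)x,x\rangle|^2$, Buzano applied to $|T|x,|T^*|x,x$, AM--GM, and the operator Jensen inequality. The only cosmetic difference is that you first derive the full scalar pointwise bound and then push $\phi$ through the resulting convex combination, whereas the paper applies $\phi$ at the outset and carries the estimates inside it; the ingredients and the final chain of inequalities coincide.
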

	\begin{proof}
		Let $x \in \mathcal{H}$ with $\|x\|=1.$ Using the convexity of the Orlicz function $\phi$, we have 
		\begin{eqnarray*}
			\phi\left(|\langle Tx, x \rangle|^2\right) &\leq& \phi \left( \langle |T|x, x \rangle \langle |T^*|x, x \rangle\right)~(\mbox{using Lemma \ref{mixed schwarz}})\\
			&\leq& \phi \left(\left( \frac{\langle |T|x, x \rangle + \langle |T^*|x, x \rangle}{2}\right)^2\right)\\
			&=& \phi \left(\frac{\langle |T|x, x \rangle ^2 + \langle |T^*|x, x \rangle ^2}{4} + \frac{\langle |T|x, x \rangle \langle x, |T^*|x \rangle }{2}\right) \\
			&\leq& \phi \left(\frac{1}{4}\left|\left\langle \left(|T| + i |T^*|\right)x, x \right\rangle\right|^2 + \frac{1}{4} \left(\||T|x\|\||T^*|x\| + |\langle |T|x, |T^*|x \rangle| \right) \right)\\
			&\leq& \frac{1}{2} \phi \left(\frac12\left|\left\langle \left(|T| + i |T^*|\right)x, x \right\rangle\right|^2\right) + \frac{1}{4} \phi \left(\||T|x\|\||T^*|x\|  \right)\\ && + \frac{1}{4} \phi \left(|\langle\left( |T^*| |T|\right)x, x \rangle|\right)
				\end{eqnarray*}
			\begin{eqnarray*}
			 &\leq& \frac{1}{2} \phi \left(\frac12 \left|\left\langle \left(|T| + i |T^*|\right)x, x \right\rangle\right|^2\right) + \frac{1}{4} \phi \left(\frac{\||T|x\|^2 + \||T^*|x \|^2}{2} \right)\\ && + \frac{1}{4} \phi \left(|\langle\left( |T^*| |T|\right)x, x \rangle|\right)\\
			&\leq&\frac{1}{2} \phi \left(\frac12 \left|\left\langle \left(|T| + i |T^*|\right)x, x \right\rangle\right|^2\right)  + \frac{1}{8} \left\langle \left(\phi(|T|^2) + \phi(|T^*|^2)\right)x, x \right\rangle \\ && +\frac{1}{4} \phi \left(|\langle\left( |T^*| |T|\right)x, x \rangle|\right)\\
		&\leq&	\frac{1}{2} \phi\left(\frac12 w^2\left(|T| + i |T^*|\right)\right) + \frac{1}{4} \phi\left(w\left(|T^*||T|\right)\right) + \frac{1}{8} \left\|\phi(|T|^2) + \phi(|T^*|^2) \right\|.
		\end{eqnarray*}
	\end{proof}

Also, from the proof of Theorem \ref{th 5}, we can show that   
\begin{cor}
    Let $T \in \mathcal{B}(\mathcal{H})$. Then for any Orlicz function $\phi$,
			\[\phi\left(w(T) \right) \leq \frac12 \left\|  \phi \left(|T|\right) + \phi \left(|T^*| \right)  \right\|.\]
   \end{cor}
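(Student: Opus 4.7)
The plan is to mirror the opening steps of the proof of Theorem \ref{th 5} but avoid squaring, so that $\phi(w(T))$ (rather than $\phi(w^2(T))$) appears on the left-hand side. Fix a unit vector $x\in\mathcal{H}$. First I apply Lemma \ref{mixed schwarz} with $\psi(t)=\eta(t)=\sqrt{t}$ and $y=x$ to obtain
\[
|\langle Tx,x\rangle|\le \||T|^{1/2}x\|\,\||T^{*}|^{1/2}x\|=\sqrt{\langle |T|x,x\rangle\,\langle |T^{*}|x,x\rangle}.
\]
The AM--GM inequality then yields $|\langle Tx,x\rangle|\le \tfrac{1}{2}\bigl(\langle |T|x,x\rangle+\langle |T^{*}|x,x\rangle\bigr)$, and applying the increasing function $\phi$ preserves this bound.

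Next I invoke the convexity of $\phi$ to split
\[
\phi\!\left(\tfrac{1}{2}\bigl(\langle |T|x,x\rangle+\langle |T^{*}|x,x\rangle\bigr)\right)\le \tfrac{1}{2}\phi\!\left(\langle |T|x,x\rangle\right)+\tfrac{1}{2}\phi\!\left(\langle |T^{*}|x,x\rangle\right),
\]
and then apply Lemma \ref{jenson} (the operator Jensen inequality, valid since $\phi$ is convex on $[0,\infty)$ and both $|T|,|T^{*}|$ are positive operators) to each term, giving $\phi(\langle |T|x,x\rangle)\le\langle\phi(|T|)x,x\rangle$ and analogously for $|T^{*}|$.

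Combining the previous two paragraphs leads to
\[
\phi(|\langle Tx,x\rangle|)\le \tfrac{1}{2}\bigl\langle(\phi(|T|)+\phi(|T^{*}|))x,x\bigr\rangle\le \tfrac{1}{2}\|\phi(|T|)+\phi(|T^{*}|)\|.
\]
Taking the supremum over unit vectors $x$, and using the continuity and monotonicity of $\phi$ to pass $\phi$ through the supremum so that $\sup_{\|x\|=1}\phi(|\langle Tx,x\rangle|)=\phi(w(T))$, yields the claimed inequality. No serious obstacle is expected; the only delicate point is justifying the interchange of $\phi$ with the supremum, which is immediate once one recalls that $\phi$ is continuous and nondecreasing on $[0,\infty)$. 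The argument can in fact be read off verbatim from the second through fifth lines of the proof of Theorem \ref{th 5}, stopping at the first-order bound rather than carrying the square through.
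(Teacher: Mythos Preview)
Your argument is correct and is exactly the route the paper intends: the corollary is stated right after Theorem \ref{th 5} with the remark that it ``can be shown from the proof of Theorem \ref{th 5}'', and your steps (mixed Schwarz with $g=h=\sqrt{t}$, AM--GM, monotonicity and convexity of $\phi$, operator Jensen, then supremum) are precisely the first-order version of the opening lines of that proof.
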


Next bound is as follows.
   
	\begin{theorem} \label{th 6}
		Let $T \in \mathcal{B}(\mathcal{H}).$ Then for any Orlicz function $\phi,$ 
		\begin{eqnarray*}
			\phi\left(w^2(T)\right) &\leq& \frac{1}{2} \min \left\{ \phi\left(w\left(|T||T^*|\right)\right), \phi\left(w(T^2)\right)\right \} + \frac{1}{4} \left\|\phi(|T|^2) + \phi(|T^*|^2) \right\|.
		\end{eqnarray*}
	\end{theorem}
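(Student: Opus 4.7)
The theorem asserts that $\phi(w^2(T))$ is bounded by each of the two quantities inside the minimum, so my plan is to prove the two inequalities separately and take the minimum at the end. Both halves proceed by fixing a unit vector $x\in\mathcal{H}$ and estimating $|\langle Tx,x\rangle|^2$, following the convexity-then-Jensen template already used in the proof of Theorem \ref{th 5}.

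For the bound involving $w(|T||T^*|)$, I would first apply Lemma \ref{mixed schwarz} with $\psi(t)=\eta(t)=\sqrt{t}$ and $y=x$, giving $|\langle Tx,x\rangle|^2\le \langle|T|x,x\rangle\langle|T^*|x,x\rangle$. Next, apply Buzano's inequality (Lemma \ref{buzano}) with the substitutions $x\mapsto |T|x$, $y\mapsto |T^*|x$, and $e\mapsto x$ to obtain
\[
|\langle Tx,x\rangle|^2 \le \tfrac{1}{2}\bigl(\||T|x\|\,\||T^*|x\| + |\langle |T^*||T|x,x\rangle|\bigr).
\]
Applying $\phi$, splitting the $\tfrac{1}{2}$-combination by scalar convexity, then bounding $\||T|x\|\,\||T^*|x\|$ inside $\phi$ by $\tfrac{1}{2}(\||T|x\|^2+\||T^*|x\|^2)$ via AM--GM and monotonicity, and finally passing $\phi$ through the inner product using Lemma \ref{jenson} combined once more with scalar convexity of $\phi$ (the exact coefficient bookkeeping that yields the factor $\tfrac{1}{4}$ occurs in the penultimate chain of the proof of Theorem \ref{th 5}), I get the pointwise estimate
\[
\phi(|\langle Tx,x\rangle|^2) \le \tfrac{1}{4}\langle(\phi(|T|^2)+\phi(|T^*|^2))x,x\rangle + \tfrac{1}{2}\phi(|\langle|T^*||T|x,x\rangle|).
\]
Taking the supremum over $\|x\|=1$ and using $w(|T^*||T|)=w(|T||T^*|)$ yields the first half of the claim.

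For the bound involving $w(T^2)$, I would apply Buzano's inequality directly with $x\mapsto Tx$, $y\mapsto T^*x$, and $e\mapsto x$; since $\langle Tx,T^*x\rangle=\langle T^2 x,x\rangle$, this gives
\[
|\langle Tx,x\rangle|^2 \le \tfrac{1}{2}\bigl(\|Tx\|\,\|T^*x\| + |\langle T^2 x,x\rangle|\bigr).
\]
Using $\|Tx\|=\||T|x\|$ and $\|T^*x\|=\||T^*|x\|$, the same convexity-plus-Jensen manipulation as above produces
\[
\phi(w^2(T)) \le \tfrac{1}{2}\phi(w(T^2)) + \tfrac{1}{4}\|\phi(|T|^2)+\phi(|T^*|^2)\|
\]
after taking the supremum. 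Combining the two estimates by taking the minimum finishes the argument. I do not foresee a serious obstacle: the structure parallels Theorem \ref{th 5}, and the only delicate point is tracking coefficients carefully when simultaneously invoking scalar convexity of $\phi$ and the operator Jensen inequality (Lemma \ref{jenson}) to move $\phi$ past the inner product, so that the coefficient $\tfrac{1}{4}$ of the norm term comes out correctly.
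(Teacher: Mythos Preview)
Your proposal is correct and follows essentially the same route as the paper: mixed Schwarz followed by Buzano for the $w(|T||T^*|)$ bound, Buzano applied directly to $Tx,\,T^*x$ for the $w(T^2)$ bound, and in each case AM--GM plus scalar convexity and the operator Jensen inequality to produce the $\tfrac{1}{4}\|\phi(|T|^2)+\phi(|T^*|^2)\|$ term. The only cosmetic difference is that the paper runs both estimates simultaneously by writing $\phi(|\langle Tx,x\rangle|^2)=\phi(\alpha|\langle Tx,x\rangle|^2+(1-\alpha)|\langle Tx,x\rangle|^2)$ and handling the two summands exactly as you do, then specializing to $\alpha=0,1$; your separate treatment of the two cases is equivalent and arguably cleaner.
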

	
	\begin{proof}
		Let $x \in \mathcal{H}$ with $\|x\|=1.$ From the convexity of the Orlicz function $\phi,$ we get
		\begin{eqnarray*}
			\phi\left(|\langle Tx, x \rangle|^2\right)
   &=& \phi \left(\alpha\langle Tx, x \rangle|^2 + (1-\alpha) \langle Tx, x \rangle|^2 \right)\\
   &\leq& \alpha \phi \left(\langle Tx, x \rangle|^2\right) + (1-\alpha) \phi \left(\langle Tx, x \rangle|^2\right)\\
			&\leq& \alpha \phi \left(\langle |T|x, x \rangle \langle |T^*|x, x \rangle\right) + (1-\alpha) \phi \left(|\langle Tx, x \rangle  \langle x, T^*x \rangle| \right)\\  &&~~\,\,\,\,\,\,\,\,\,\,\,\,\,\,\,\,\,\,\,\,\,\,\,\,\,\,\,\,\,\,\,\,\,\,\,\,\,\,\,\,\,\,\,\,\,\,\,\,\,\,\,\,\,\,\,\,\,\,\,\,\,\,\,\,\,\,\,\,\,\,\,\,\,\,\,\,\,\,\,\,\,\,\,\,\,\,\,\,\,\,\,\,\,\,\,\,\,\,\,\,\,\,\,\,\,\,\,\,\,\,\,\,(\mbox{using Lemma \ref{mixed schwarz}})\\&\leq& \alpha \phi \left(\langle |T|x, x \rangle \langle x, |T^*|x \rangle\right) + (1-\alpha) \phi \left(\frac{|\langle Tx, T^*x \rangle| + \|Tx\|\|T^*x\|}{2}\right)\\  &&~~\,\,\,\,\,\,\,\,\,\,\,\,\,\,\,\,\,\,\,\,\,\,\,\,\,\,\,\,\,\,\,\,\,\,\,\,\,\,\,\,\,\,\,\,\,\,\,\,\,\,\,\,\,\,\,\,\,\,\,\,\,\,\,\,\,\,\,\,\,\,\,\,\,\,\,\,\,\,\,\,\,\,\,\,\,\,\,\,\,\,\,\,\,\,\,\,\,\,\,\,\,\,\,\,\,\,\,\,\,\,\,\,(\mbox{using Lemma \ref{buzano}})\\ &\leq& \alpha \phi \left(\frac{\left|\langle |T|x, |T^*|x \rangle \right| + \||T|x\| \||T^*|x\|}{2}\right) + \frac{1-\alpha}{2} \phi \left(|\langle T^2x, x \rangle|\right)\\ && + \frac{1-\alpha}{2} \phi \left(\frac{\|Tx\|^2 + \|T^*x\|^2}{2}\right)\\ 
			&\leq& \frac{\alpha}{2} \phi \left(|\langle \left(|T^*||T|\right)x, x \rangle| \right) + \frac{\alpha}{2} \phi \left(\frac{\||T|x\|^2 + \||T^*|x\|^2}{2}\right)\\ &&+ \frac{1-\alpha}{2} \phi \left(|\langle T^2x, x \rangle|\right) + \frac{1-\alpha}{2} \phi \left(\left\langle \left(\frac{|T|^2 + |T^*|^2}{2}\right)x, x \right \rangle \right)\\
			&\leq& \frac{\alpha}{2} \phi \left(|\langle |T^*||T|x, x \rangle| \right) + \frac{\alpha}{2} \phi \left(\left\langle \left(\frac{|T|^2 + |T^*|^2}{2}\right)x, x \right \rangle \right)\\ &&+ \frac{1-\alpha}{2} \phi \left(|\langle T^2x, x \rangle|\right) + \frac{1-\alpha}{2} \left\langle \left(\frac{\phi(|T|^2) + \phi(|T^*|^2)}{2}\right)x, x \right \rangle \\
			&\leq& \frac{\alpha}{2} \phi \left(w(|T^*| |T|)\right) + \frac{1}{4} \left\| \phi(|T|^2) + \phi (|T^*|^2) \right\|  + \frac{1-\alpha}{2} \phi \left(w(T^2)\right).
			\end{eqnarray*}
   Therefore, taking the supremum over $\|x\|=1,$ we obtain the desired bound.
	\end{proof}

\begin{remark}
    In particular, taking the Orlicz function $\phi(t) = t^r$, $r \geq 1$ in Theorem \ref{th 6}, we obtain that
 for $r\geq 1,$
		\begin{eqnarray}
			w^{2r}(T) &\leq&  \frac{1}{2} \min \left\{ w^r\left(|T||T^*|\right), w^r(T^2)\right \}  + \frac{1}{4} \left\||T|^{2r} + |T^*|^{2r} \right\|.
		\end{eqnarray}
		Clearly, this bound is stronger than the bound \cite[Corollary  2.6]{bulletin sci}, namely,
		$w^2(T) \leq \frac{1}{2} w \left( |T| |T^*|\right)+ \frac{1}{4} \left\||T|^2 + |T^*|^2 \right\|. $
\end{remark}
	
	From the proof of Theorem \ref{th 6}, we can also show that
\begin{theorem} \label{th 8}
		Let $T \in \mathcal{B}(\mathcal{H}).$ Then for any Orlicz function $\phi,$ 
		\begin{eqnarray*}
			\phi\left(w^2(T)\right) &\leq& \frac{1}{2} \min \left\{ \phi\left(w\left(|T||T^*|\right)\right), \phi\left(w(T^2)\right)\right \} + \frac{1}{2} \phi \left( \frac12  \left\||T|^2 + |T^*|^2 \right\|\right).
		\end{eqnarray*}
	\end{theorem}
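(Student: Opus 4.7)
The plan is to mirror the proof of Theorem \ref{th 6} almost verbatim, with a single deviation at the last convexity step. Fix a unit vector $x \in \mathcal{H}$ and $\alpha \in [0,1]$, and write $|\langle Tx,x\rangle|^2 = \alpha|\langle Tx,x\rangle|^2 + (1-\alpha)|\langle Tx,x\rangle|^2$. Applying convexity of $\phi$, the mixed Schwarz inequality (Lemma \ref{mixed schwarz}) on the first piece and the identity $|\langle Tx,x\rangle|^2 = |\langle Tx,x\rangle \langle x, T^*x\rangle|$ on the second, followed by Buzano (Lemma \ref{buzano}) on each, then by AM--GM in the form $\|Sx\|\|Rx\| \leq \tfrac{1}{2}(\|Sx\|^2+\|Rx\|^2)$ and one further use of convexity of $\phi$, reproduces exactly the intermediate estimate derived in Theorem \ref{th 6}. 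After consolidating the two identical convex-combination terms carrying coefficients $\alpha/2$ and $(1-\alpha)/2$, this reads
$$\phi(|\langle Tx,x\rangle|^2) \leq \tfrac{\alpha}{2}\,\phi\!\left(|\langle |T^*||T|x,x\rangle|\right) + \tfrac{1-\alpha}{2}\,\phi\!\left(|\langle T^2 x,x\rangle|\right) + \tfrac{1}{2}\,\phi\!\left(\left\langle \tfrac{|T|^2+|T^*|^2}{2}x,x\right\rangle\right).$$

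The single point of departure from Theorem \ref{th 6} is the following: rather than invoke Jensen's inequality (Lemma \ref{jenson}) to push $\phi$ inside the last average, I would leave that average inside $\phi$ and take the supremum over $\|x\|=1$ directly. Because $A := \tfrac{|T|^2+|T^*|^2}{2}$ is positive, $\|A\| = \sup_{\|x\|=1}\langle Ax,x\rangle$; and because $\phi$ is continuous and monotone increasing, the supremum commutes with $\phi$, yielding
$$\sup_{\|x\|=1}\phi\!\left(\left\langle \tfrac{|T|^2+|T^*|^2}{2}x,x\right\rangle\right) = \phi\!\left(\left\|\tfrac{|T|^2+|T^*|^2}{2}\right\|\right) = \phi\!\left(\tfrac{1}{2}\||T|^2+|T^*|^2\|\right).$$
The suprema of the remaining two terms contribute $\tfrac{\alpha}{2}\phi(w(|T||T^*|))$ and $\tfrac{1-\alpha}{2}\phi(w(T^2))$, respectively; here I use $w(|T^*||T|) = w(|T||T^*|)$, which holds because the two operators are adjoint to one another and $w(\cdot) = w((\cdot)^*)$.

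Assembling these estimates yields the parametrized bound
$$\phi(w^2(T)) \leq \tfrac{\alpha}{2}\,\phi(w(|T||T^*|)) + \tfrac{1-\alpha}{2}\,\phi(w(T^2)) + \tfrac{1}{2}\,\phi\!\left(\tfrac{1}{2}\||T|^2+|T^*|^2\|\right),$$
valid for every $\alpha \in [0,1]$. Specializing to $\alpha = 1$ gives the bound with $w(|T||T^*|)$ and specializing to $\alpha = 0$ gives the bound with $w(T^2)$; both hold, so their minimum does too, which is exactly the asserted inequality. I do not foresee any substantive obstacle: every manipulation above is already executed inside the proof of Theorem \ref{th 6}, and the only novel ingredient --- that the supremum passes through $\phi$ --- is immediate from continuity and monotonicity of $\phi$ together with the basic formula $\|A\| = \sup_{\|x\|=1}\langle Ax,x\rangle$ for positive $A$.
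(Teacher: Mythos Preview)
Your proposal is correct and is precisely the argument the paper has in mind: the paper introduces Theorem \ref{th 8} with the phrase ``From the proof of Theorem \ref{th 6}, we can also show that,'' and the intended modification is exactly the one you describe---stop one step short of applying Lemma \ref{jenson} to the term $\phi\!\left(\big\langle \tfrac{|T|^2+|T^*|^2}{2}x,x\big\rangle\right)$, then take the supremum using monotonicity of $\phi$ together with $\|A\|=\sup_{\|x\|=1}\langle Ax,x\rangle$ for positive $A$. Your handling of the endpoints $\alpha=0,1$ and the observation $w(|T^*||T|)=w(|T||T^*|)$ are also in line with the paper.
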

	
	By choosing the Orlicz function $\phi(t)=e^t-1$ in Theorem \ref{th 8}, we obtain
	\begin{cor}
		Let $T \in \mathcal{B}(\mathcal{H}).$ Then 
		\begin{eqnarray}\label{1-1}
		    {w^2(T)} \leq   \log \left[ \frac{1}{2} e^{w(T^2)} + \frac12  e^{ \frac{ \|T^*T+TT^*  \|}{2} }\right] &\leq& \frac12 \|T^*T+TT^*  \|
		\end{eqnarray} 
 and 
 \begin{eqnarray}\label{1-2}
		    {w^2(T)} \leq   \log \left[ \frac{1}{2} e^{w(|T||T^*|)} + \frac12  e^{ \frac{ \|T^*T+TT^*  \|}{2} }\right] &\leq& \frac12 \|T^*T+TT^*  \|.
		\end{eqnarray} 
	\end{cor}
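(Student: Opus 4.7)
The plan is to apply Theorem \ref{th 8} with the specific Orlicz function $\phi(t) = e^t - 1$. First, verify $\phi$ satisfies the axioms of an Orlicz function: it is continuous, convex, strictly increasing, $\phi(0)=0$, $\phi(t)>0$ for $t>0$, and $\phi(t)\to\infty$. Substituting this $\phi$ into the conclusion of Theorem \ref{th 8}, and using the identity $|T|^2 + |T^*|^2 = T^*T + TT^*$, gives
\[
e^{w^2(T)} - 1 \leq \frac{1}{2}\min\bigl\{e^{w(|T||T^*|)} - 1,\ e^{w(T^2)} - 1\bigr\} + \frac{1}{2}\Bigl(e^{\|T^*T+TT^*\|/2} - 1\Bigr).
\]
The two $-\tfrac{1}{2}$ terms on the right sum to $-1$ and cancel the $-1$ on the left, which yields
\[
e^{w^2(T)} \leq \frac{1}{2}\min\bigl\{e^{w(|T||T^*|)},\ e^{w(T^2)}\bigr\} + \frac{1}{2}e^{\|T^*T+TT^*\|/2}.
\]
Taking logarithms and choosing each argument of the min separately produces the left inequalities in both \eqref{1-1} and \eqref{1-2}.

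For the right inequalities, I would show that both $w(T^2)$ and $w(|T||T^*|)$ are bounded above by $M := \tfrac{1}{2}\|T^*T + TT^*\|$. For any unit vector $x \in \mathcal{H}$, the Cauchy--Schwarz and AM--GM inequalities give
\[
|\langle T^2 x, x\rangle| = |\langle Tx, T^*x\rangle| \leq \|Tx\|\,\|T^*x\| \leq \tfrac{1}{2}\bigl(\|Tx\|^2 + \|T^*x\|^2\bigr) = \tfrac{1}{2}\langle (T^*T + TT^*)x, x\rangle,
\]
and likewise $|\langle |T||T^*|x, x\rangle| = |\langle |T^*|x, |T|x\rangle| \leq \||T|x\|\,\||T^*|x\| \leq \tfrac{1}{2}\langle (T^*T + TT^*)x, x\rangle$. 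Taking suprema over unit $x$ delivers $w(T^2) \leq M$ and $w(|T||T^*|) \leq M$. Since $a \leq M$ implies $\log\bigl[\tfrac{1}{2}e^a + \tfrac{1}{2}e^M\bigr] \leq \log e^M = M$ by monotonicity of $\log$, the right inequalities follow immediately in both \eqref{1-1} and \eqref{1-2}.

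There is no significant obstacle here; the corollary is a direct specialization of Theorem \ref{th 8}. The only items requiring care are verifying that $\phi(t)=e^t-1$ meets the Orlicz-function axioms, tracking the cancellation of the $-1$ constants coming from $\phi(a) - 1$ on both sides, and recording the two elementary domination bounds $w(T^2), w(|T||T^*|) \leq M$ that collapse the logarithmic term to $M$ for the rightmost inequality.
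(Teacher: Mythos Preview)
Your proposal is correct and follows exactly the paper's approach: the paper simply states that the corollary follows ``by choosing the Orlicz function $\phi(t)=e^t-1$ in Theorem \ref{th 8},'' and you have carried this out in detail, including the straightforward cancellation of the $-1$ terms and the elementary bounds $w(T^2),\,w(|T|\,|T^*|)\le \tfrac12\|T^*T+TT^*\|$ needed for the rightmost inequalities.
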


Also applying Proposition \ref{Orlicz buzano}, we obtain the following numerical radius bound.
\begin{cor}\label{prop-1}
 Let $T \in \mathcal{B}(\mathcal{H}).$ Then
 \begin{eqnarray}
     w^4(T) \leq \log \left(\frac12 e^{w^2(T^2)}+\frac12e^{\frac{ \left\|  |T|^4+|T^*|^4\right\|}{2} }\right) &\leq& \frac{ \left\|  |T|^4+|T^*|^4\right\|}{2}.
 \end{eqnarray}
\end{cor}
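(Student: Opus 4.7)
The plan is to apply the Orlicz--Buzano inequality \eqref{0--1} (i.e., Proposition \ref{Orlicz buzano} with the Orlicz function $\phi(t)=e^{t^2}-1$) to the specific triple $x\to Tx$, $y\to T^*x$, $e\to x$ for a unit vector $x\in\mathcal{H}$. With these substitutions, $\langle x,e\rangle\langle e,y\rangle$ becomes $\langle Tx,x\rangle\langle x,T^*x\rangle=|\langle Tx,x\rangle|^2$, while $\|x\|\|y\|$ becomes $\|Tx\|\|T^*x\|$ and $|\langle x,y\rangle|$ becomes $|\langle Tx,T^*x\rangle|=|\langle T^2x,x\rangle|$. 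Squaring the resulting inequality then gives
\[
|\langle Tx,x\rangle|^{4}\le\log\!\left(\tfrac12\,e^{\|Tx\|^{2}\|T^{*}x\|^{2}}+\tfrac12\,e^{|\langle T^{2}x,x\rangle|^{2}}\right).
\]

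Next I would replace the two exponents by $x$-free upper bounds. For the second, $|\langle T^2x,x\rangle|^2\le w^2(T^2)$ is immediate. For the first, write $\|Tx\|^2\|T^*x\|^2=\langle|T|^2x,x\rangle\langle|T^*|^2x,x\rangle$ and apply AM--GM on the two scalars to obtain $\langle(\tfrac{|T|^2+|T^*|^2}{2})x,x\rangle^2$; then Lemma \ref{positive op} with $r=2$ (applied to the positive operator $\tfrac{|T|^2+|T^*|^2}{2}$) pushes the square inside, and Lemma \ref{convex op} yields $\bigl\|(\tfrac{|T|^2+|T^*|^2}{2})^2\bigr\|\le\tfrac{\||T|^4+|T^*|^4\|}{2}$. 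Combining, $\|Tx\|^2\|T^*x\|^2\le\tfrac{\||T|^4+|T^*|^4\|}{2}$. Since $\log$ and $\exp$ are monotone, substituting these two bounds in and taking the supremum over $\|x\|=1$ on the left produces the first inequality.

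The second inequality is obtained by observing that the Dragomir bound \eqref{eq dragomir} (applied with $S=T^*$ and $r=2$) gives $w^{2}(T^{2})\le\tfrac{1}{2}\||T|^{4}+|T^{*}|^{4}\|$, so both exponents in the $\log$ are bounded by $\tfrac{\||T|^4+|T^*|^4\|}{2}$, and the convex combination collapses. The main obstacle I expect is the step of bounding $\|Tx\|^{2}\|T^{*}x\|^{2}$ uniformly in $x$ by an operator-norm quantity, because a naive use of $\|Tx\|\le\|T\|$ would be too crude and would not recover the $\tfrac12\||T|^4+|T^*|^4\|$ constant; the trick is to chain AM--GM with Lemmas \ref{positive op} and \ref{convex op} as above so that the exponent inside the $\log$ matches the target bound exactly.
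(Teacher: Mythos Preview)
Your proposal is correct and follows essentially the same route as the paper: substitute $x\to Tx$, $y\to T^*x$, $e\to x$ into \eqref{0--1}, bound $|\langle T^2x,x\rangle|^2\le w^2(T^2)$ and $\|Tx\|^2\|T^*x\|^2\le\tfrac12\||T|^4+|T^*|^4\|$, and take the supremum. The only cosmetic difference is that the paper bounds $\|Tx\|^2\|T^*x\|^2$ via the simpler AM--GM step $\|Tx\|^2\|T^*x\|^2\le\tfrac12(\|Tx\|^4+\|T^*x\|^4)$ followed by Lemma~\ref{positive op}, whereas you route through $\langle\tfrac{|T|^2+|T^*|^2}{2}x,x\rangle^2$ and then invoke Lemma~\ref{convex op}; both chains land on the same constant.
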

\begin{proof}
Let $x\in \mathcal{H}$ with $\|x\|=1.$
    Replacing  $x$ by $Tx$, $y$ by $T^*x$ and $e$ by $x$ in the inequality \eqref{0--1}, we obtain 
    \begin{eqnarray*}
        |\langle Tx,x\rangle|^4&\leq& \log \left( \frac12 e^{\|Tx\|^2 \|T^*x\|^2} +\frac12 e^{|\langle Tx, T^*x\rangle|^2}\right)\\
        &\leq& \log \left( \frac12 e^{\frac{\|Tx\|^4 +\|T^*x\|^4}{2}} +\frac12 e^{|\langle T^2x, x\rangle|^2}\right)\\
        &\leq& \log \left(\frac12 e^{w^2(T^2)}+\frac12e^{\frac{ \left\|  |T|^4+|T^*|^4\right\|}{2} }\right)
        \leq \frac{ \left\|  |T|^4+|T^*|^4\right\|}{2}.
    \end{eqnarray*}
    Considering the supremum over $\|x\|=1$, we get the desired bounds.
\end{proof}

In \cite{pintu arxiv}, Bhunia proved that $w^n(T)  \leq \frac{1}{2^{n-1}} w(T^n) +  \sum_{k=1}^{n-1} \frac{1}{2^k} \|T^k\|\|T\|^{n-k} \leq \frac{1}{2^{n-1}} w(T^n) +  \left( 1-\frac{1}{2^{n-1}}\right) {\|T\|^n} \quad \text{for every $n\in \mathbb{N}$}.$
We obtain a generalisation of the above inequalities via Orlicz function.
 For this purpose we need the following lemma.
	
	\begin{lemma}\label{extension buzano} \cite[Lemma 2.1]{pintu arxiv}
		Let $x_1, x_2, x_3 \dots x_n, e \in \mathcal{H}$ with $\|e\|=1.$ Then 
		\[\left|\prod_{k=1}^{n}\langle x_k, e \rangle \right| \leq  \frac{\left|\langle x_1, x_2 \rangle \prod_{k=3}^n \langle x_k, e \rangle \right| + \prod_{k=1}^{n}  \|x_k\|}{2}.\]
	\end{lemma}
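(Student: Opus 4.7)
The statement is a direct generalization of Buzano's inequality (Lemma \ref{buzano}) from a product of two inner products with $e$ to a product of $n$ such inner products. The plan is to single out the first two factors $\langle x_1,e\rangle \langle e,x_2\rangle$ (using $|\langle x_2,e\rangle| = |\langle e,x_2\rangle|$), apply Buzano directly to these, and then control the remaining factors $\prod_{k=3}^n |\langle x_k,e\rangle|$ by Cauchy-Schwarz only in the term where this is needed.

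Concretely, I would start by writing
\[
\left|\prod_{k=1}^{n}\langle x_k, e \rangle \right| \;=\; \bigl|\langle x_1,e\rangle\langle e,x_2\rangle\bigr|\cdot\prod_{k=3}^{n}|\langle x_k,e\rangle|,
\]
and then apply Lemma \ref{buzano} to the first factor to obtain
\[
\bigl|\langle x_1,e\rangle\langle e,x_2\rangle\bigr| \;\leq\; \tfrac{1}{2}\bigl(\|x_1\|\|x_2\| + |\langle x_1,x_2\rangle|\bigr).
\]
Multiplying through by $\prod_{k=3}^{n}|\langle x_k,e\rangle|$ splits the bound into two pieces. The piece containing $|\langle x_1,x_2\rangle|$ already assembles into $\bigl|\langle x_1,x_2\rangle\prod_{k=3}^{n}\langle x_k,e\rangle\bigr|$ after pulling the absolute values inside the product, which matches the first summand on the right-hand side of the claim.

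For the other piece, I would apply the Cauchy-Schwarz inequality $|\langle x_k,e\rangle|\leq \|x_k\|\|e\|=\|x_k\|$ (using $\|e\|=1$) to each factor $k=3,\dots,n$, turning $\|x_1\|\|x_2\|\prod_{k=3}^n|\langle x_k,e\rangle|$ into $\prod_{k=1}^{n}\|x_k\|$, which is exactly the second summand. Adding the two estimates and dividing by $2$ yields the claim. There is no real obstacle here: the only thing to watch is the bookkeeping of the conjugation in $\langle e,x_2\rangle$ versus $\langle x_2,e\rangle$ (irrelevant under absolute values), and the choice to apply Cauchy-Schwarz only on the $\|x_1\|\|x_2\|$ branch so that the $\langle x_k,e\rangle$ factors are preserved in the $|\langle x_1,x_2\rangle|$ branch, which is what gives the inequality its refined form.
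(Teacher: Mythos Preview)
Your argument is correct. The paper does not give its own proof of this lemma; it is simply quoted from \cite[Lemma 2.1]{pintu arxiv}. Your approach---apply Buzano's inequality (Lemma \ref{buzano}) to the pair $\langle x_1,e\rangle\langle e,x_2\rangle$, keep the $\langle x_k,e\rangle$ factors intact on the $|\langle x_1,x_2\rangle|$ branch, and bound them by Cauchy--Schwarz on the $\|x_1\|\|x_2\|$ branch---is exactly the natural proof and matches the argument in the cited source.
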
 
	
	\begin{theorem} \label{th 7}
		Let $T \in \mathcal{H}.$ Then for any Orlicz function $\phi$ and for every $n \in \mathbb{N},$
		\begin{eqnarray*}
		    \phi \left(w^n(T)\right)  &\leq& \frac{1}{2^{n-1}} \phi \left(w(T^n)\right) +  \sum_{k=1}^{n-1} \frac{1}{2^k} \phi \left(\|T^k\|\|T\|^{n-k}\right)\\
  &\leq &\frac{1}{2^{n-1}} \phi \left(w(T^n)\right) +  \left( 1-\frac{1}{2^{n-1}}\right) \phi \left({\|T\|^n}\right).
		\end{eqnarray*}
  \end{theorem}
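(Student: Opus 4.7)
The plan is to iterate Lemma \ref{extension buzano} in order to produce a pointwise scalar bound on $|\langle Tx, x\rangle|^n$, and then apply the convexity of the Orlicz function $\phi$ together with the supremum over unit vectors. The key observation is that the weights that will arise form a probability vector, so Jensen's inequality for convex functions on the real line converts a scalar bound directly into a bound on $\phi(|\langle Tx, x\rangle|^n)$.

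First I would fix a unit vector $x \in \mathcal{H}$ and note that $|\langle T^*x, x\rangle| = |\langle Tx, x\rangle|$, so $|\langle Tx, x\rangle|^n$ is the modulus of the product $\langle Tx, x\rangle \prod_{k=2}^n \langle T^*x, x\rangle$. Applying Lemma \ref{extension buzano} with $x_1 = Tx$, $x_2 = \cdots = x_n = T^*x$ and $e = x$, and using the identity $\langle Tx, T^*x\rangle = \langle T^2 x, x\rangle$, gives the base estimate
\[
|\langle Tx, x\rangle|^n \le \tfrac{1}{2}|\langle T^2 x, x\rangle|\,|\langle Tx, x\rangle|^{n-2} + \tfrac{1}{2}\|Tx\|\,\|T^*x\|^{n-1}.
\]
An induction on $j = 1, \dots, n-1$ repeats this move: at step $j$, I apply the $(n-j)$-vector version of Lemma \ref{extension buzano} to the product $\langle T^{j+1}x, x\rangle \cdot \langle T^*x, x\rangle^{n-j-1}$, using $\langle T^{j+1}x, T^*x\rangle = \langle T^{j+2}x, x\rangle$, so that a factor $\tfrac{1}{2^j} |\langle T^{j+1}x,x\rangle|\,|\langle Tx,x\rangle|^{n-j-1}$ is replaced by $\tfrac{1}{2^{j+1}}|\langle T^{j+2}x,x\rangle|\,|\langle Tx,x\rangle|^{n-j-2}$ plus a new error term $\tfrac{1}{2^{j+1}}\|T^{j+1}x\|\,\|T^*x\|^{n-j-1}$. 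After $n-1$ iterations this yields
\[
|\langle Tx, x\rangle|^n \le \frac{1}{2^{n-1}}|\langle T^n x, x\rangle| + \sum_{k=1}^{n-1} \frac{1}{2^k} \|T^k x\|\,\|T^*x\|^{n-k}.
\]

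Next I would bound $\|T^k x\| \le \|T^k\|$ and $\|T^* x\| \le \|T^*\| = \|T\|$, and observe that the coefficients $\tfrac{1}{2^{n-1}}$ and $\tfrac{1}{2^k}$, $k = 1, \dots, n-1$, sum to $1$. Since $\phi$ is convex and non-decreasing, the standard Jensen inequality for convex combinations on the real line gives
\[
\phi\bigl(|\langle Tx, x\rangle|^n\bigr) \le \frac{1}{2^{n-1}} \phi\bigl(|\langle T^n x, x\rangle|\bigr) + \sum_{k=1}^{n-1} \frac{1}{2^k} \phi\bigl(\|T^k\|\,\|T\|^{n-k}\bigr).
\]
Taking the supremum over $\|x\|=1$ delivers the first asserted inequality. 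The second inequality then follows from $\|T^k\| \le \|T\|^k$ together with monotonicity of $\phi$, which gives $\phi(\|T^k\|\,\|T\|^{n-k}) \le \phi(\|T\|^n)$, combined with the identity $\sum_{k=1}^{n-1} \tfrac{1}{2^k} = 1 - \tfrac{1}{2^{n-1}}$.

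The only mildly delicate step is the bookkeeping in the iterated application of Lemma \ref{extension buzano}: tracking that the length $n-j-1$ of the power $|\langle Tx,x\rangle|^{n-j-1}$ decreases by exactly one at each step while a new $\tfrac{1}{2^j}$-weighted norm summand peels off, and that $\langle T^{j+1}x, T^*x\rangle$ always collapses to $\langle T^{j+2}x,x\rangle$. Once the induction is set up cleanly the rest is a one-line appeal to Jensen's inequality and the monotonicity of~$\phi$.
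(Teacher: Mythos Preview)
Your proposal is correct and follows essentially the same route as the paper: iterated application of Lemma~\ref{extension buzano} to peel off successive powers of $T$, combined with the convexity and monotonicity of $\phi$. The only difference is organizational---you first establish the full scalar inequality for $|\langle Tx,x\rangle|^n$ and then apply Jensen once, whereas the paper interleaves each application of Lemma~\ref{extension buzano} with a convexity split of $\phi$; the two arguments are equivalent.
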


	\begin{proof}
		Let $x \in \mathcal{H}$ with $\|x\| =1.$ Then by the convex property of $\phi$, we get
 	\begin{eqnarray*}
			\phi \left(|\langle Tx, x \rangle|^n\right) &=& \phi \left(\left|\langle Tx, x \rangle \langle T^*x, x \rangle \langle T^*x, x \rangle^{n-2}\right| \right)\\ 
   &\leq& \phi \left(\frac{\left|\langle Tx, T^*x \rangle \langle T^*x ,x \rangle^{n-2}\right| + \|Tx\|\|T^*x \|^{n-1}}{2}\right)\\ &&\,\,\,\,\,\,\,\,\,\,\,\,\,\,\,\,\,\,\,\,\,\,\,\,\,\,\,\,\,\,\,\,\,\,\,\,\,\,\,\,\,\,\,\,\,\,\,\,\,\,\,\,\,\,\,\,(\mbox{using Lemma \ref{extension buzano}})\\ 
   &\leq& \frac{1}{2} \phi \left(\left|\langle T^2x, x \rangle \langle T^*x, x \rangle^{n-2}\right|\right)+ \frac{1}{2} \phi \left(\|Tx\|\|T^*x\|^{n-1}\right)\\ 
   &=& \frac{1}{2} \phi \left(\left|\langle T^2x, x \rangle \langle T^*x, x \rangle \langle T^*x, x \rangle^{n-3}\right| \right) + \frac{1}{2} \phi \left(\|Tx\|\|T^*x\|^{n-1}\right) \\ 
   &\leq& \frac{1}{2} \phi \left(\frac{\left|\langle T^2x, T^*x \rangle \langle T^*x, x \rangle^{n-3}\right| + \|T^2x\| \|T^*x\|^{n-2}}{2}\right)\\&&+ \frac{1}{2} \phi \left(\|Tx\|\|T^*x\|^{n-1}\right)\\ &&\,\,\,\,\,\,\,\,\,\,\,\,\,\,\,\,\,\,\,\,\,\,\,\,\,\,\,\,\,\,\,\,\,\,\,\,\,\,\,\,\,\,\,\,\,\,\,\,\,\,\,\,\,\,\,\,(\mbox{using Lemma \ref{extension buzano}})\\ &\leq& \frac{1}{4} \phi \left(\left|\langle T^3x, x \rangle \langle T^*x, x \rangle \langle T^*x, x \rangle^{n-4}\right| \right) + \frac{1}{4} \phi \left(\|T^2x\|\|T^*x\|^{n-2}\right) \\ && + \frac{1}{2} \phi \left(\|Tx\|\|T^*x\|^{n-1}\right)\\
			&\leq& \frac{1}{4} \phi \left(\frac{\left|\langle T^3x, T^*x \rangle  \langle T^*x, x\rangle^{n-4}\right| + \|T^3x\| \|T^*x\|^{n-3}}{2}\right)\\ && +  \frac{1}{4} \phi \left(\|T^2x\|\|T^*x\|^{n-2}\right)  + \frac{1}{2} \phi \left(\|Tx\|\|T^*x\|^{n-1}\right)\\
			&\leq& \frac{1}{8} \phi \left(\left|\langle T^4x, x \rangle \langle T^*x, x \rangle^{n-4}\right|\right) + \frac{1}{8} \phi \left(\|T^3x\|\|T^*x\|^{n-3}\right)  \\  && +  \frac{1}{4} \phi \left(\|T^2x\|\|T^*x\|^{n-2}\right)  + \frac{1}{2} \phi \left(\|Tx\|\|T^*x\|^{n-1}\right).
		\end{eqnarray*}
		Repeating this process, we obtain
		\begin{eqnarray*}
		\phi \left(|\langle Tx, x \rangle|^n\right) &\leq &\frac{1}{2^{n-1}} \phi \left(|\langle T^n x , x\rangle|\right) + \sum_{k=1}^{n-1} \frac{1}{2^k} \phi \left(\|T^kx\|\|T^*x\|^{n-k}\right)\\
		&\leq& \frac{1}{2^{n-1}} \phi \left(w(T^n)\right) +  \sum_{k=1}^{n-1} \frac{1}{2^k} \phi \left(\|T^k\|\|T\|^{n-k}\right).
		\end{eqnarray*}
		By taking the supremum over all unit vectors $x,$ we get the required inequalities.
	\end{proof}


 By choosing the Orlicz function $\phi(t)=e^t-1$ in Theorem \ref{th 7}, we obtain
	\begin{cor}
		Let $T \in \mathcal{B}(\mathcal{H}).$ Then for every $n (\geq 2) \in \mathbb{N}$,
		\begin{eqnarray}\label{nil}
		    {w(T)} \leq  \sqrt[n]{ \log \left[ \frac{1}{2^{n-1}} e^{w(T^n)} + \left( 1-\frac{1}{2^{n-1}}\right) e^{\|T\|^n}\right]} &\leq&  \|T\|.
		\end{eqnarray} 
  In particular, for n=2,
  \begin{eqnarray}\label{N-222}
		    {w(T)} \leq \sqrt{\log \left( \frac{1}{2} e^{w(T^2)} +\frac12  e^{\|T\|^2}  \right)} &\leq& \|T\|.
		\end{eqnarray} 
	\end{cor}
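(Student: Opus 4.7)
The plan is to deduce this corollary directly from Theorem \ref{th 7} by specializing the Orlicz function to $\phi(t)=e^t-1$, which is indeed an Orlicz function (continuous, convex, increasing, with $\phi(0)=0$). The main algebraic fact that makes this work cleanly is that the coefficients $\frac{1}{2^{n-1}}$ and $1-\frac{1}{2^{n-1}}$ sum to $1$, so the additive constants from $\phi(t)=e^t-1$ telescope.

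First I would substitute $\phi(t)=e^t-1$ into the second (cleaner) inequality of Theorem \ref{th 7}. This yields
\[
e^{w^n(T)}-1 \leq \frac{1}{2^{n-1}}\bigl(e^{w(T^n)}-1\bigr) + \left(1-\frac{1}{2^{n-1}}\right)\bigl(e^{\|T\|^n}-1\bigr).
\]
Because the two weights on the right add to $1$, the $-1$ terms collapse and cancel with the $-1$ on the left, leaving
\[
e^{w^n(T)} \leq \frac{1}{2^{n-1}} e^{w(T^n)} + \left(1-\frac{1}{2^{n-1}}\right) e^{\|T\|^n}.
\]
Taking logarithms of both sides (both sides are positive) and then an $n$-th root produces the first inequality in \eqref{nil}.

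For the second inequality in \eqref{nil}, the key observation is the classical chain $w(T^n)\leq \|T^n\|\leq \|T\|^n$, so $e^{w(T^n)}\leq e^{\|T\|^n}$. Substituting this into the convex combination gives
\[
\frac{1}{2^{n-1}} e^{w(T^n)} + \left(1-\frac{1}{2^{n-1}}\right) e^{\|T\|^n} \leq e^{\|T\|^n},
\]
so the logarithm is at most $\|T\|^n$, and the $n$-th root yields $\|T\|$. The special case $n=2$ in \eqref{N-222} is just the display above with $n=2$, so no additional work is required. There is essentially no obstacle here: once Theorem \ref{th 7} is in hand, the corollary is a mechanical substitution, and the only point that requires care is verifying that the cancellation of the constants $-1$ is exact, which relies precisely on the fact that the weights form a probability distribution.
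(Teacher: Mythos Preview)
Your proposal is correct and matches the paper's approach exactly: the corollary is stated as an immediate consequence of Theorem \ref{th 7} with $\phi(t)=e^{t}-1$, and the upper bound by $\|T\|$ follows from $w(T^n)\leq \|T\|^n$ just as you indicate. The cancellation of the $-1$ terms via the fact that the weights sum to $1$ is precisely the point, and you have handled it correctly.
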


 Following result gives an estimation for the  numerical radius of nilpotent operators.

 \begin{cor}
     Let $T \in \mathcal{B}(\mathcal{H})$. If $T^n=0$ for some $n (\geq 2) \in \mathbb{N}$, then
		\begin{eqnarray*}
		    {w(T)} \leq  \sqrt[n]{  \log \left[ \frac{1}{2^{n-1}}  + \left( 1-\frac{1}{2^{n-1}}\right) e \right] } \, \|T\|.
		\end{eqnarray*} 
 \end{cor}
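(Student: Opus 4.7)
The plan is to obtain the claimed estimate as a direct consequence of the bound \eqref{nil}, combined with the $1$-homogeneity of the numerical radius and the operator norm. Since the corollary asserting \eqref{nil} is already proved in the excerpt, essentially the only work left is to plug in the hypothesis $T^n=0$ and rescale.

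First I would apply \eqref{nil} to the nilpotent operator $T$. Since $T^n = 0$ we have $w(T^n) = 0$, so $e^{w(T^n)} = 1$, and the bound \eqref{nil} collapses to
\begin{equation*}
w(T) \;\le\; \sqrt[n]{\,\log\!\left[\tfrac{1}{2^{n-1}} + \Bigl(1-\tfrac{1}{2^{n-1}}\Bigr) e^{\|T\|^n}\right]}.
\end{equation*}
This already contains the nilpotency information, but the factor of $\|T\|$ sits inside the exponential rather than multiplying the whole expression, so we cannot read off the desired bound directly.

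To extract a clean multiplicative $\|T\|$, I would use a normalization argument. The conclusion is trivial when $T=0$, so assume $T\neq 0$ and consider $\widetilde T := T/\|T\|$. Nilpotency is scale invariant, hence $\widetilde T^{\,n}=0$, and by construction $\|\widetilde T\|=1$. Applying the previous display to $\widetilde T$ gives
\begin{equation*}
w(\widetilde T) \;\le\; \sqrt[n]{\,\log\!\left[\tfrac{1}{2^{n-1}} + \Bigl(1-\tfrac{1}{2^{n-1}}\Bigr) e\right]}.
\end{equation*}
Now $w$ is $1$-homogeneous, so $w(\widetilde T) = w(T)/\|T\|$, and multiplying through by $\|T\|$ yields precisely the stated inequality.

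There is really no hard step here: the only thing to notice is that the $\|T\|^n$ appearing inside the logarithm in \eqref{nil} cannot in general be pulled outside as a multiplicative factor (the function $s \mapsto \log[a + (1-a)e^s]$ is convex with value $0$ at $0$, so the inequality $g(s)\le g(1)\,s$ only holds for $s \le 1$), which is exactly why the normalization $\|\widetilde T\|=1$ is essential. Once the rescaling is in place, the proof is a one-line substitution.
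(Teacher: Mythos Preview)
Your proof is correct and follows essentially the same approach as the paper: the paper's own proof simply says ``Consider $\|T\|=1$'' and then reads off the bound from \eqref{nil}, which is exactly your normalization-by-$\|T\|$ argument written more tersely. Your additional remark explaining why the $\|T\|^n$ inside the logarithm cannot be pulled out directly (and hence why the rescaling is needed) is a nice clarification, but the underlying argument is identical.
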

	\begin{proof}
	    Consider  $\|T\|=1.$ Then from \eqref{nil}, we get $ {w^n(T)} \leq    \log \left[ \frac{1}{2^{n-1}}  + \left( 1-\frac{1}{2^{n-1}}\right) e \right].$ 
  This completes the proof.
	\end{proof}

	\bigskip
	
	\noindent \textbf{Declarations:} 
	\textbf{Conflict of Interest:} The authors declare that there is no conflict of interest.\\

	\bibliographystyle{amsplain}
			
	\end{document}